\documentclass[leqno]{article}
\usepackage[frenchb,english]{babel}
\usepackage[T1]{fontenc}
\usepackage[utf8]{inputenc}
\usepackage{amsmath}
\usepackage{amssymb}
\usepackage{amsfonts}
\usepackage{enumerate}
\usepackage{vmargin}
\usepackage[all]{xy}
\usepackage{mathrsfs}
\usepackage{mathtools}
\usepackage{lmodern}
\usepackage{comment}
\usepackage[colorlinks=true,linkcolor=blue]{hyperref}
\setmarginsrb{3cm}{3cm}{3.5cm}{3cm}{0cm}{0cm}{1.5cm}{3cm}

\footskip1.3cm

\newcommand{\B}{\ensuremath{\mathrm{B}}}
\newcommand{\C}{\ensuremath{\mathbb{C}}}

\newcommand{\E}{\ensuremath{\mathbb{E}}}

\let\H\relax 
\newcommand{\H}{\mathrm{H}} 

\let\L\relax 
\newcommand{\L}{\mathrm{L}}
\newcommand{\M}{\mathrm{M}}
\newcommand{\N}{\ensuremath{\mathcal{N}}}


\newcommand{\R}{\ensuremath{\mathbb{R}}}


\newcommand{\W}{\mathrm{W}}

\newcommand{\e}{\mathrm{e}}

\renewcommand{\d}{\mathop{}\mathopen{}\mathrm{d}} 
\newcommand{\vect}{\ensuremath{\mathop{\mathrm{span}\,}\nolimits}}

\let\i\relax 
\newcommand{\i}{\mathrm{i}}
\newcommand{\ovl}{\overline}
\newcommand{\Id}{\mathrm{Id}} 
\newcommand{\VN}{\mathrm{VN}} 
\newcommand{\la}{\langle}\newcommand{\ra}{\rangle}
\renewcommand{\leq}{\ensuremath{\leqslant}}
\renewcommand{\geq}{\ensuremath{\geqslant}}
\newcommand{\qed}{\hfill \vrule height6pt  width6pt depth0pt}
\newcommand{\norm}[1]{ \| #1  \|}
\newcommand{\bnorm}[1]{ \big\| #1  \big\|}

\newcommand{\xra}{\xrightarrow} 
\newcommand{\co}{\colon}

\newcommand{\ot}{\otimes}
\newcommand{\otvn}{\ovl\ot}

\newcommand{\BMO}{{\mathrm{BMO}}}
\newcommand{\scr}{\mathscr} 

\newcommand{\ov}{\overset}


\newcommand{\Mult}{\mathrm{Mult}}
\DeclareMathOperator{\tr}{Tr}
\selectlanguage{english}
\newtheorem{thm}{Theorem}[section]
\newtheorem{defi}[thm]{Definition}
\newtheorem{prop}[thm]{Proposition}

\newtheorem{cor}[thm]{Corollary}
\newtheorem{lemma}[thm]{Lemma}

\newtheorem{example}[thm]{Example}

\newtheorem{remark}[thm]{Remark}

\newenvironment{proof}[1][]{\noindent {\it Proof #1} : }{\hbox{~}\qed
\smallskip
}

\numberwithin{equation}{section}

\usepackage{tocloft}
\setlength{\cftbeforesecskip}{0pt}
\usepackage[nottoc,notlot,notlof]{tocbibind}

\let\OLDthebibliography\thebibliography
\renewcommand\thebibliography[1]{
  \OLDthebibliography{#1}
  \setlength{\parskip}{0pt}
  \setlength{\itemsep}{0pt plus 0.3ex}
}


\begin{document}
\selectlanguage{english}
\title{\bfseries{Dilations of markovian semigroups of measurable Schur multipliers}}
\date{}
\author{\bfseries{C\'edric Arhancet}}

\maketitle

\begin{abstract}
Using probabilistic tools, we prove that any weak* continuous semigroup $(T_t)_{t \geq 0}$ of selfadjoint unital completely positive measurable Schur multipliers acting on the space $\B(\L^2(X))$ of bounded operators on the Hilbert space $\L^2(X)$, where $X$ is a suitable measure space, can be dilated by a weak* continuous group of Markov $*$-automorphisms on a bigger von Neumann algebra. We also construct a Markov dilation of these semigroups. Our results imply the boundedness of the McIntosh's $\H^\infty$ functional calculus of the generators of these semigroups on the associated Schatten spaces and some interpolation results connected to $\BMO$-spaces. We also give an answer to a question of Steen, Todorov and Turowska on completely positive continuous Schur multipliers.
\end{abstract}


\makeatletter
 \renewcommand{\@makefntext}[1]{#1}
 \makeatother
 \footnotetext{
 2020 {\it Mathematics subject classification:}
 Primary 47A20, 47D03, 46L51; Secondary 46M35.
\\
{\it Key words and phrases}: semigroups, dilations, Schatten spaces, positive definite kernels, functional calculus, Schur multipliers, $\BMO$-spaces, completely positive maps.}

\tableofcontents

\section{Introduction}
\label{sec:Introduction}

\hspace{0.4cm} The study of dilations of operators is of central importance in operator theory and has a long tradition in functional analysis. Indeed, dilations are powerful tools which allow to reduce general studies of operators to more tractable ones. 

Suppose $1<p<\infty$. In the spirit of Sz.-Nagy's dilation theorem for contractions on Hilbert spaces, Fendler \cite{Fen97} proved a dilation result for any strongly continuous semigroup $(T_t)_{t \geq 0}$ of positive contractions on an $\L^p$-space $\L^p(\Omega)$. More precisely, this theorem says that there exists a measure space $\Omega'$, two positive contractions $J \co \L^p(\Omega) \to \L^p(\Omega')$ and $P \co \L^p(\Omega') \to \L^p(\Omega)$ and a strongly continuous group of positive invertible isometries $(U_t)_{t \in \R}$ on $\L^p(\Omega')$ such that 
\begin{equation}
\label{Fendler-dilation}
T_t = PU_tJ	
\end{equation}
for any $t \geq 0$, see also \cite{Fen98}. Note that in this situation the map $J \co \L^p(\Omega) \to \L^p(\Omega')$ is an isometric embedding whereas the map $JP \co \L^p(\Omega') \to \L^p(\Omega')$ is a contractive projection.

In the noncommutative setting, measure spaces and $\L^p$-spaces are replaced by von Neumann algebras and noncommutative $\L^p$-spaces and positive maps by completely positive maps. In their remarkable paper \cite{JLM07}, Junge and Le Merdy essentially\footnote{\thefootnote. The authors prove that there exists no ``reasonable'' analogue of a variant of Fendler's result for a discrete semigroup $(T^k)_{k \geq 0}$ of completely positive contractions.} showed that there is no hope to have a ``reasonable'' analog of Fendler's theorem for semigroups of completely positive contractions acting on noncommutative $\L^p$-spaces. It is a \textit{striking} difference with the world of classical (=commutative) $\L^p$-spaces of measure spaces.

Let $X$ be a $\sigma$-finite measure space. In this paper, our first main result (Theorem \ref{Th-dilation-continuous}) gives a dilation on weak* continuous semigroups of selfadjoint unital completely positive measurable Schur multipliers acting on $\B(\L^2(X))$ in the spirit of \eqref{Fendler-dilation} but at the level $p=\infty$.  Our construction heavily relies on the use of Gaussian processes. Our dilation induces an isometric dilation similar to the one of Fendler's theorem for the strongly continuous semigroup induced by the semigroup $(T_t)_{t \geq 0}$ on the Schatten space $S^p_X \ov{\mathrm{def}}{=} S^p(\L^2(X))$ for any $1 \leq p <\infty$. Finally, note that in this paper the weak* continuity of the semigroup means that the map $\R^+ \to \mathbb{C}$, $t \mapsto \la T_t(x),y \ra_{\B(\L^2(X)),S^1_X}$ is continuous for any $x \in \B(\L^2(X))$ and any $y \in S^1_X$. 

Moreover, this result is analogous to the result of \cite{Arh20} which provides a dilation of weak* continuous semigroups of selfadjoint unital completely positive Fourier multipliers acting on the group von Neumann algebra $\VN(G)$ of a locally compact group $G$. Note also that in the case where $X$ is a \textit{finite set} equipped with the counting measure, it is possible to use the considerably more complicated approach of \cite{Arh19} (relying on the use of ultrafilters) whose assumptions are satisfied by the results of \cite{Arh13a} and \cite{Ric08} for obtaining a dilation. But even in this case, our new and more general approach gives an explicit and a more useful dilation since the von Neumann algebra where the group $(U_t)_{t \in \R}$ acts is injective. This point is important for applications in vector-valued noncommutative $\L^p$-spaces since we need injective von Neumann algebras in this context, see \cite{Pis98}. We refer to the papers \cite{Arh13b}, \cite{ALM14}, \cite{AFM17} and \cite{HaM11} for related things.  Finally, note that Schur multipliers are crucial operators in noncommutative analysis and are connected to a considerable number of topics as harmonic analysis, double operator integrals, perturbation theory, Grothendieck's theorem. More recently, the semigroups of Schur mutipliers considered in this paper was connected to noncommutative geometry in the memoir \cite{ArK22a}.

One of the important consequences of Fendler's theorem is the boundedness, for the generator of a strongly continuous semigroup $(T_t)_{t \geq 0}$ of positive contractions, of a bounded $\H^\infty$ functional calculus which is a fundamental tool in various areas: harmonic analysis of semigroups, multiplier theory, Kato's square root problem, maximal regularity in parabolic equations and control theory. For detailed information, we refer the reader to \cite{Haa06}, \cite{JMX06}, \cite{KuW04}, to the survey \cite{LeM07} and to the recent book \cite{HvNVW18} and references therein. Our results also give a similar result on $\H^\infty$ functional calculus in the noncommutative context in the case of semigroups of measurable Schur multipliers. 

Recall that a locally integrable function $f \co \R^n \to \mathbb{C}$ is said to have bounded mean oscillation if
$$
\norm{f}_{\BMO}  
\ov{\mathrm{def}}{=} \sup_{Q \in \mathcal{Q}} \frac{1}{|Q|} \int_Q   \left| f(y) - f_Q \right|^2  \d y < \infty
$$
where $f_Q \ov{\mathrm{def}}{=} \frac{1}{|Q|} \int_Q f$ is the average of $f$ over $Q$, $|Q|$ the volume of $Q$ and $\mathcal{Q}$ is the set of all cubes in $\mathbb{R}^n$. The quotient space of bounded mean oscillation functions modulo the space of constant functions with the previous seminorm is a Banach space. The importance of the $\BMO$-norm and the $\BMO$-space lies in the fact that they arise as end-point spaces for estimating the bounds of linear maps on function spaces on $\mathbb{R}^n$. This includes many Calder\'on-Zygmund operators. By interpolation, $\BMO$-spaces is an effective tool to obtain $\L^p$-bounds of Fourier multipliers. In this sense, the $\BMO$-space is a natural substitute of the space $\L^\infty(\R^n)$.

Spaces of functions with bounded means oscillation can also be studied through semigroups. If $\mathcal{T}=(T_t)_{t \geq 0}$ is a Markov semigroup on an $\L^\infty$-space $\L^\infty(\Omega)$ of a (finite) measure space $\Omega$, then we can define a $\BMO$-seminorm by
$$
\norm{f}_{\BMO_{\mathcal{T}}} 
\ov{\mathrm{def}}{=} \sup_{t \geq 0} \bnorm{T_t\left(|f - T_t(f)|^2 \right) }_\infty^{\frac{1}{2}}. 
$$
More generally, Junge and Mei introduced in \cite{JM12} several noncommutative semigroup $\BMO$-spaces starting from a Markov semigroup on a semifinite von Neumann algebra (i.e. a noncommutative $\L^\infty$-space). Relations between these spaces are studied and interpolation results are obtained. A crucial ingredient of their approach is Markov dilations of semigroups that allow one to use martingale theory and derive results from this probabilistic setting. See also \cite{FMS19}, \cite{JiW17}, \cite{JM10} and \cite{JuZ15} for other applications of Markov dilations. 
    
The second main result of this paper is Theorem \ref{Th-Markov-dilation-Schur} which gives the construction of a Markov dilation of each weak* continuous semigroup of selfadjoint unital completely positive measurable Schur multipliers acting on $\B(\L^2(X))$. Again, our construction relies on probabilistic tools. From this result, we obtain interpolation results with $\BMO$-spaces as end-point for these semigroups, see Theorem \ref{Th-interpolation-BMO} and Remark \ref{Remarque-ultime}. Probably, our results will be useful for estimating norms of Schur multipliers on Schatten spaces. Finally, we equally refer to \cite{Cas19} and \cite{CJSZ20} for strongly related papers.

\paragraph{Structure of the paper} The paper is organized as follows. The next section \ref{sec:preliminaries} gives background. In Section \ref{Sec-complements}, we give useful observations on measurable positive definite kernels and negative definite kernels. In Section \ref{sec-description}, we give complements on measurable Schur multipliers. We answer a question of Steen, Todorov and Turowska in Remark \ref{Remark-answer}. We also give a precise description of weak* continuous semigroups of unital selfadjoint completely positive measurable Schur multipliers on some suitable measure spaces. Section \ref{sec-dilation-Schur} gives a proof of our main result of dilation of these semigroups. Next in Section \ref{sec-Markov}, we also construct a Markov dilation for these semigroups. In the last section \ref{sec-Applications}, we describe some applications of our results to functional calculus and interpolation.

\section{Preliminaries}
\label{sec:preliminaries}

In this paper, we suppose that the considered measures spaces are complete (i.e. every subset of every null set is measurable).

\paragraph{Isonormal processes} Let $H$ be a real Hilbert space. An $H$-isonormal process on a probability space $(\Omega,\mu)$ \cite[Definition 1.1.1]{Nua06} \cite[Definition 6.5]{Neer07} is a linear mapping $\W \co H \to \L^0(\Omega)$ from $H$ into the space $\L^0(\Omega)$ of measurable functions on $\Omega$ with the following properties:
\begin{flalign}
& \label{isonormal-gaussian} \text{for any $h \in H$ the random variable $\W(h)$ is a centered real Gaussian,} \\
&\label{esperance-isonormal} \text{for any } h_1, h_2 \in H \text{ we have } \E\big(\W(h_1) \W(h_2)\big)= \langle h_1, h_2\rangle_H, \\
&\label{density-isonormal} \text{the linear span of the products } \W(h_1)\W(h_2)\cdots \W(h_m), 
\text{ with } m \geq 0 \text{ and } h_1,\ldots, h_m \\
&\nonumber\text{in }H, \text{ is dense in the real Hilbert space $\L^2_\R(\Omega)$.}\end{flalign}  
Here, we make the convention that the empty product, corresponding to $m=0$ in \eqref{density-isonormal}, is the constant function $1$. Moreover, $\E$ is used to denote the expected value.

If $(e_i)_{i \in I}$ is an orthonormal basis of $H$ and if $(\gamma_i)_{i \in I}$ is a family of independent standard Gaussian random variables on a probability space $\Omega$ then for any $h \in H$, the family $(\gamma_i \langle h,e_i\rangle_H)_{i \in I}$ is summable in $\L^2(\Omega)$ and
\begin{equation}
\label{Concrete-W}
\W(h)
\ov{\mathrm{def}}{=} \sum_{i \in I} \gamma_i \langle h,e_i\rangle_H, \quad h \in H
\end{equation}
defines an $H$-isonormal process.

Recall that the span of elements $\e^{\i\W(h)}$ is weak* dense in $\L^\infty(\Omega)$ by \cite[Remark 2.15]{Jan97}.  
Using \cite[Proposition E.2.2]{HvNVW18} with $t$ instead of $\xi$ and by observing by \eqref{esperance-isonormal} that the variance $\E(\W(h)^2)$ of the Gaussian variable $\W(h)$ is equal to $\norm{h}_H^2$, we see that
\begin{equation}
\label{Esperance-exponentielle-complexe}
\E\big(\e^{\i t\W(h)}\big)
=\e^{-\frac{t^2}{2} \norm{h}_H^2}, \quad t \in \R, h \in H.
\end{equation}

If $u \co H \to H$ is a contraction, we denote by $\Gamma_\infty(u) \co \L^\infty(\Omega) \to \L^\infty(\Omega)$ the (symmetric) second quantization of $u$ acting on the \textit{complex} Banach space $\L^\infty(\Omega)$. Recall that the map $\Gamma_\infty(u) \co \L^\infty(\Omega) \to \L^\infty(\Omega)$ preserves the integral\footnote{\thefootnote. That means that for any $f \in \L^\infty(\Omega)$ we have $\int_{\Omega} \Gamma_\infty(u)f \d\mu=\int_{\Omega} f \d\mu$.}. If $u$ is a surjective isometry we have
\begin{equation}
\label{SQ1}
\Gamma_\infty(u) \big(\e^{\i\W(h)}\big)
=\e^{\i\W(u(h))}, \quad h \in H
\end{equation}
and $\Gamma_\infty(u) \co \L^\infty(\Omega) \to \L^\infty(\Omega)$ is a $*$-automorphism of the von Neumann algebra $\L^\infty(\Omega)$.

Furthermore, the second quantization functor $\Gamma$ satisfies the following elementary result \cite[Lemma 2.1]{Arh20}. In the part 1, we suppose that the construction\footnote{\thefootnote. The existence of a proof of Lemma \ref{Lemma-semigroup-continuous} without \eqref{Concrete-W} is unclear.} is given by the concrete representation \eqref{Concrete-W}.   

\begin{lemma}
\label{Lemma-semigroup-continuous} 
\begin{enumerate}
	\item If $\L^\infty(\Omega)$ is equipped with the weak* topology then the map $H \to \L^\infty(\Omega)$, $h \mapsto \e^{\i\W(h)}$ is continuous.
	\item If $(U_t)_{t \in \R}$ is a strongly continuous group of orthogonal operators acting on the Hilbert space $H$ then $(\Gamma_\infty(U_t))_{t \in \R}$ is a weak* continuous group of operators acting on the Banach space $\L^\infty(\Omega)$.
\end{enumerate}
\end{lemma}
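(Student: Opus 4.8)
The plan is to prove the stronger assertion that $h \mapsto \e^{\i\W(h)}$ is continuous from $H$ into $\L^2(\Omega)$ equipped with its \emph{norm} topology, and then to deduce the weak* continuity into $\L^\infty(\Omega)$ from this. For the first point I would fix $h,h' \in H$ and compute the $\L^2$-distance directly: using the linearity of $\W$, the relation $\ovl{\e^{\i\W(h')}}=\e^{-\i\W(h')}$, and $\W(h)-\W(h')=\W(h-h')$, one gets
\begin{equation*}
\bnorm{\e^{\i\W(h)}-\e^{\i\W(h')}}_{\L^2}^2
= \E\big(2-\e^{\i\W(h-h')}-\e^{-\i\W(h-h')}\big)
= 2-2\e^{-\frac{1}{2}\norm{h-h'}_H^2},
\end{equation*}
where the last equality is the characteristic function formula \eqref{Esperance-exponentielle-complexe} applied to $h-h'$ with $t=\pm 1$. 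As $\norm{h-h'}_H \to 0$ the right-hand side tends to $0$, giving the claimed $\L^2$-continuity. To pass to weak* continuity I would use that all the functions $\e^{\i\W(h)}$ lie in the unit ball of $\L^\infty(\Omega)$: for a test function $g\in\L^2(\Omega)\subseteq\L^1(\Omega)$ the pairing is controlled by Cauchy--Schwarz and the estimate above, and for a general $g\in\L^1(\Omega)$ one approximates $g$ in $\L^1$-norm by $\L^2$-functions and absorbs the error using the uniform bound $\norm{\e^{\i\W(h)}}_\infty=1$.

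For the second point I would first record that $(\Gamma_\infty(U_t))_{t\in\R}$ is a group: by functoriality of the second quantization, $\Gamma_\infty(U_{s+t})=\Gamma_\infty(U_sU_t)=\Gamma_\infty(U_s)\Gamma_\infty(U_t)$ and $\Gamma_\infty(U_0)=\Id$, and each $\Gamma_\infty(U_t)$ is a unital $*$-automorphism since each $U_t$ is orthogonal. The strategy is again to establish strong $\L^2$-continuity first and then descend to the weak* topology. On an exponential, \eqref{SQ1} gives $\Gamma_\infty(U_t)\big(\e^{\i\W(h)}\big)=\e^{\i\W(U_t h)}$; since $(U_t)$ is strongly continuous we have $U_t h \to U_{t_0}h$ in $H$ as $t\to t_0$, so Part 1 yields that $t \mapsto \Gamma_\infty(U_t)\big(\e^{\i\W(h)}\big)$ is continuous into $\L^2(\Omega)$, and hence so is $t\mapsto\Gamma_\infty(U_t)f$ for every $f$ in the linear span $D$ of the exponentials.

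Next I would upgrade this to all of $\L^2(\Omega)$. Because $\Gamma_\infty(U_t)$ is an integral-preserving $*$-homomorphism, for $f\in\L^\infty(\Omega)$ one has $\norm{\Gamma_\infty(U_t)f}_{\L^2}^2=\int_\Omega \Gamma_\infty(U_t)(|f|^2)\d\mu=\int_\Omega|f|^2\d\mu=\norm{f}_{\L^2}^2$, so each $\Gamma_\infty(U_t)$ is $\L^2$-isometric and extends to a unitary on $\L^2(\Omega)$; in particular the family is uniformly bounded there. Since $D$ is dense in $\L^2(\Omega)$ (this follows from the weak* density of the exponentials in $\L^\infty(\Omega)$ together with Kaplansky's density theorem, or directly from \eqref{density-isonormal}), a standard $\epsi/3$-argument promotes the $\L^2$-continuity from $D$ to every $f\in\L^2(\Omega)$. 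Finally I would transfer this to weak* continuity on $\L^\infty(\Omega)$ exactly as in Part 1: for $f\in\L^\infty(\Omega)$ the net $\Gamma_\infty(U_t)f$ is bounded by $\norm{f}_\infty$ in $\L^\infty(\Omega)$, so $\L^2$-continuity against $\L^2$ test functions, combined with $\L^1$-approximation, yields that $t\mapsto\int_\Omega \Gamma_\infty(U_t)f\, g\,\d\mu$ is continuous for every $g\in\L^1(\Omega)$.

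The main obstacle is the passage from the dense subalgebra $D$ to the whole algebra in the weak* topology, where naive weak* approximation fails for lack of norm control. The device that makes everything go through is to route the argument through $\L^2(\Omega)$: the automorphisms $\Gamma_\infty(U_t)$ are simultaneously $\L^\infty$-isometries and $\L^2$-isometries, $D$ is $\L^2$-dense, and the uniform $\L^\infty$-bound lets one descend from $\L^2$ test functions to arbitrary $\L^1$ test functions. One should also be attentive to the single input from the group $(U_t)$ that is actually used, namely the continuity $t\mapsto U_t h$ coming from strong continuity on $H$. It is worth noting that the proposed route to Part 1 appears to rely only on the characteristic function identity \eqref{Esperance-exponentielle-complexe} and the linearity of $\W$, rather than on the explicit series \eqref{Concrete-W}.
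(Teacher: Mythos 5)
Your proof is correct, and it follows a genuinely different route from the paper's. The paper does not actually prove this lemma in its text: it cites \cite[Lemma 2.1]{Arh4}, and for Part 1 it explicitly assumes that the isonormal process is given by the concrete series \eqref{Concrete-W}, adding in a footnote that the existence of a proof without \eqref{Concrete-W} is unclear. Your argument dispenses with that assumption. The identity
\begin{equation*}
\bnorm{\e^{\i\W(h)}-\e^{\i\W(h')}}_{\L^2(\Omega)}^2
=2-2\e^{-\frac{1}{2}\norm{h-h'}_H^2}
\leq \norm{h-h'}_H^2
\end{equation*}
uses only the linearity of $\W$ and the formula \eqref{Esperance-exponentielle-complexe}, which the paper derives from \eqref{esperance-isonormal} for an arbitrary isonormal process; the descent from $\L^2$-norm continuity to weak* continuity uses only the uniform bound $\norm{\e^{\i\W(h)}}_\infty=1$ together with the density of $\L^2(\Omega)$ in $\L^1(\Omega)$; and Part 2 invokes only \eqref{SQ1}, the $*$-homomorphism and integral-preserving properties of $\Gamma_\infty$, and the weak* density of the span of the exponentials, all of which the paper states without reference to \eqref{Concrete-W}. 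Hence your proof is self-contained and strictly more general: it establishes the lemma from the axioms \eqref{isonormal-gaussian}--\eqref{density-isonormal} alone, thereby answering the question raised in the paper's footnote. Two details are worth tightening. First, the group law $\Gamma_\infty(U_s)\Gamma_\infty(U_t)=\Gamma_\infty(U_{s+t})$, which you attribute to functoriality, can be kept entirely within the paper's stated facts: check it on exponentials via \eqref{SQ1} and extend by normality of $*$-automorphisms and weak* density of the span $D$ of exponentials, which is a unital $*$-subalgebra because $\e^{\i\W(h)}\e^{\i\W(h')}=\e^{\i\W(h+h')}$ and $\ovl{\e^{\i\W(h)}}=\e^{\i\W(-h)}$. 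Second, the $\L^2$-density of $D$ needs no appeal to Kaplansky: a net in $D$ converging weak* in $\L^\infty(\Omega)$ to $f$ converges in particular weakly in $\L^2(\Omega)$ (test against $\L^2(\Omega)\subset\L^1(\Omega)$), and since $D$ is a subspace, its weak and norm closures in $\L^2(\Omega)$ coincide.
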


Let $H$ be a real Hilbert space. Following \cite[Definition 2.2]{NVW15} and \cite[Definition 6.11]{Neer07}, we say that an $\L^2_\R(\R^+,H)$-isonormal process $\W$ is an $H$-cylindrical Brownian motion. In this case, for any $t \geq 0$ and any $h \in H$, we let
\begin{equation}
\label{Def-Wt-h}
\W_t(h)\ov{\mathrm{def}}{=}\W\big(1_{[0,t]} \ot h\big).	
\end{equation}
We introduce the filtration $(\scr{F}_t)_{t \geq 0}$ defined by 
\begin{equation}
\label{Filtration-H-cylindrical}
\scr{F}_t\ov{\mathrm{def}}{=} \sigma\big(\W_r(h) : r \in [0,t], h \in H\big),
\end{equation} 
that is the $\sigma$-algebra generated by the random variables $\W_r(h)$ for $r \in [0,t]$ and $h \in H$.

By \cite[p. 77]{Neer07}, for any fixed $h \in H$, the family $(\W_t(h))_{t \geq 0}$ is a Brownian motion. This means that \cite[Definition 6.2]{Neer07}
\begin{flalign}
& \label{isonormal-almost} \text{$\W_0(h) = 0$ almost surely,} \\
&\label{difference-1} \text{$\W_t(h)-\W_s(h)$ is Gaussian with variance $(t-s)\norm{h}_H^2$ for any $0 \leq s \leq t$,} \\
&\label{difference-2} \text{$\W_t(h)-\W_s(h)$ is independent of $\{\W_r(h) :  r \in [0 ,s] \}$ for any $0 \leq s \leq t$}.  
\end{flalign}
Indeed by \cite[p. 163]{Neer07}, 
\begin{flalign}
\label{difference-3}
&\text{the increment $\W_t(h)-\W_s(h)$ is independent of the $\sigma$-algebra $\scr{F}_s$}.
\end{flalign}
Moreover, by \cite[p. 163]{Neer07} the family $(\W_t(h))_{t \geq 0}$ is a martingale with respect to $(\scr{F}_t)_{t \geq 0}$. In particular, the random variable $\W_t(h)$ is $\scr{F}_t$-measurable. If $0 \leq s \leq t$, note that 
$$
\norm{1_{]s,t]} \ot h}_{\L^2_\R(\R^+,H)}^2
= \bnorm{1_{]s,t]}}_{\L^2_\R(\R^+)}^2 \norm{h}_H^2
=(t-s) \norm{h}_H^2.
$$ 
Using \eqref{Esperance-exponentielle-complexe} together with the previous computation, we obtain
\begin{equation}
\label{Esperance-exponentielle-complexe-2}
\mathrm{E}\big(\e^{\i \W(1_{]s,t]} \ot h)}\big)
=\e^{-\frac{t-s}{2} \norm{h}_H^2}, \quad 0 \leq s \leq t,\ h \in H.
\end{equation}

\paragraph{Probabilities}
%
Let $\Omega$ be a probability space and let $X$ be a Banach space. If $f \in \L^1(\Omega,X)$ is independent of the sub-$\sigma$-algebra $\scr{F}$, then by \cite[Proposition 2.6.35]{HvNVW16} its conditional expectation $\E(f|\scr{F})$ with respect to $\scr{F}$ is given by the constant function:
\begin{equation}
\label{Hy-2.6.35}
\E(f|\scr{F}) 
= \E(f).
\end{equation}

\paragraph{Hilbert-Schmidt operators} Let $X$ be a $\sigma$-finite measure space. We will use the space $S^\infty_X \ov{\mathrm{def}}{=} S^\infty(\L^2(X))$ of compact operators, its dual $S^1_X$ and the space $\B(\L^2(X))$ of bounded operators on the Hilbert space $\L^2(X)$. If $f \in \L^2(X \times X)$, we denote the associated Hilbert-Schmidt operator by
\begin{equation}
\label{Def-de-Kf}
\begin{array}{cccc}
  K_f  \co &  \L^2(X)   &  \longrightarrow   & \L^2(X)   \\
    &   \xi  &  \longmapsto       &  \int_{X} f(\cdot,y)\xi(y) \d y  \\
\end{array}.	
\end{equation}
We will also use the notation $K((x,y) \mapsto f(x,y))$ for $K_f$. With the notation $\check{f}(x,y) \ov{\mathrm{def}}{=} f(y,x)$, we have 
\begin{equation}
\label{Adjoint}
(K_f)^*
=K_{\check{\ovl{f}}}.
\end{equation}
The operator $K_f$ is selfadjoint if and only if $f(x,y)=\ovl{f(y,x)}$ almost everywhere. 

For any $f,g \in \L^2(X \times X)$, we have $K_fK_g=K_h$ where $h \in \L^2(X \times X)$ is defined by 
\begin{equation}
\label{composition-Hilbert-Schmidt}
h(x,y)
=\int_X f(x,z)g(z,y) \d z
\end{equation}
with $\norm{h}_{2} \leq \norm{f}_{2} \norm{g}_{2}$.


\paragraph{Kernels}
Let $X$ be a set. A function $\varphi \co X \times X \to \C$ is called hermitian if $\varphi(y,x) = \ovl{\varphi(x,y)}$ for any $x,y \in X$. A function $\varphi \co X \times X \to \mathbb{C}$ is a positive definite kernel if for any integer $n \geq 1$, any $c_1,\ldots,c_n \in \mathbb{C}$ and any $x_1,\ldots,x_n \in X$ we have
\begin{equation}
\label{Positive-definite-kernel}
\sum_{i,j=1}^n c_i\ovl{c_j} \varphi(x_i,x_j) 
\geq 0.
\end{equation}
In this definition, it is enough to consider mutually different elements $x_1,\ldots,x_n$ of $X$. By \cite[1.5 p. 68]{BCR84}, any positive definite kernel is hermitian.

\begin{example} \normalfont
\label{Example-kernel-3}
If $X$ is a finite set then the function $\varphi \co X \times X \to \mathbb{C}$ is a positive definite kernel if and only if the matrix $[\varphi(x_i,x_j)]$ is positive definite.
\end{example}

\begin{example} \normalfont
\label{Example-kernel-2}
Let $H$ be a Hilbert space and let $\alpha \co X \to H$ be a map. By \cite[p. 9]{Boz87}, the function $\varphi \co X \times X \to \mathbb{C}$ defined by
\begin{equation}
\label{Def-varphi}
\varphi(x,y) 
= \la \alpha_x,\alpha_y\ra_H, \quad x,y \in X
\end{equation}
is a positive definite kernel. 
\end{example}

Conversely, by \cite[p. 82]{BCR84} every positive definite kernel $\varphi$ is of this form for some suitable Hilbert space $H$ and some map $\alpha \co X \to H$. Moreover, if $\varphi$ is real-valued, we can take a real Hilbert space.

We say that $\varphi \co X \times X \to \C$ is a negative definite kernel if $\varphi$ is hermitian and if for any integer $n \geq 1$, any $c_1,\ldots,c_n \in \mathbb{C}$ with $c_1+\cdots +c_n=0$ and any $x_1,\ldots,x_n \in X$ we have
$$
\sum_{i,j=1}^n c_i \ovl{c_j} \varphi(x_i,x_j) 
\leq 0.
$$

\begin{example} \normalfont
\label{Example-kernel}
Let $H$ be a real Hilbert space and let $\alpha \co X \to H$ be a map. The map $\psi \co X \times X \to \R$ defined by $\psi(x,y) = \norm{\alpha_x - \alpha_y}_H^2$ is a negative definite kernel. 
\end{example}

By \cite[Proposition 3.2 p. 82]{BCR84}
, every \textit{real-valued} negative definite kernel $\psi$ which vanishes on the diagonal $\{(x,x) : x \in X\}$ is of this form for a real Hilbert space $H$.

The following is the famous Schoenberg's theorem \cite[Theorem 2.2 p. 74]{BCR84} which relates negative definite kernels and positive definite kernels.

\begin{thm}
\label{Th-Schoenberg}
Let $X$ be a set and let $\psi \co X \times X \to \mathbb{C}$ be a map. Then $\psi$ is a negative definite kernel if and only if for any $t >0$ the map $\e^{-t\psi} \co X \times X \to \mathbb{C}$ is a positive definite kernel.
\end{thm}


\section{Complements on measurable kernels}
\label{Sec-complements}

The following lemma is folklore. For the sake of completeness, we give an argument following Cartier \cite[Lemme p. 287]{Car81}. 

\begin{lemma}
\label{Lemma-weak-measurable}
Let $H$ be a Hilbert space. Suppose that $X$ is a set equipped with a $\sigma$-algebra $\mathcal{A}$. Let $\alpha \co X \to H$ be a function. For any $x,y \in X$, we define $\varphi \co X \times X \to \mathbb{C}$ by \eqref{Def-varphi}. If $\varphi$ is measurable with respect to $\mathcal{A} \ot \mathcal{A}$ then the map $\alpha$ is weak measurable, i.e. for any $h \in H$, the map $X \to \mathbb{C}$, $x \mapsto \langle \alpha_x,h \rangle_H$ is measurable.
\end{lemma}

\begin{proof}
We denote by $H_1$ the set of vectors $h$ of $H$ such that the function $X \to \mathbb{C}$, $x \mapsto \la \alpha_x ,h \ra_H$ is measurable. It is obvious that $H_1$ is a closed subspace of $H$. It is clear that any vector of $H$ which is orthogonal to the range of $\alpha$ belongs to $H_1$, i.e. $\alpha(X)^\perp \subset H_1$. We infer that $\ovl{\vect \alpha(X)}^\perp \subset H_1$. Since the partial functions of $\varphi$ are measurable, for any $y \in X$ the map $X \to \mathbb{C}$, $x \mapsto \langle \alpha_x,\alpha_{y} \rangle_H$ is measurable. So we also have the inclusion $\alpha(X) \subset H_1$, hence $\ovl{\vect \alpha(X)} \subset H_1$. We conclude that $H \subset H_1$. 
\end{proof}

\begin{lemma}
\label{Lemma-2-measurable}
Suppose that $X$ is a set equipped with a $\sigma$-algebra $\mathcal{A}$. Let $\psi \co X \times X \to \R$ be a measurable real-valued negative definite kernel which vanishes on the diagonal $\{ (x,x) : x \in X \}$. Then there exists a real Hilbert space $H$ and a weak measurable map $\alpha \co X \to H$ such that
$$
\psi(x,y) 
= \norm{\alpha_x - \alpha_y}_H^2, \quad x,y \in X.
$$
\end{lemma}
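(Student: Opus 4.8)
The non-measurable version of the statement is already available: by the converse to Example~\ref{Example-kernel-2} (that is, \cite[Proposition 3.2 page 82]{BCR}), once we produce a real-valued positive definite kernel out of $\psi$ we obtain a real Hilbert space and a map into it realizing $\psi$ as a squared-distance kernel. The whole issue is therefore to arrange that this map be weak measurable, and the plan is to set things up so that this follows directly from Lemma~\ref{Lemma-weak-measurable}. The bridge between the two is a single positive definite kernel, built from $\psi$ by fixing a base point, whose associated inner-product kernel is manifestly $\mathcal{A}\ot\mathcal{A}$-measurable.

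Concretely, I would first fix a point $x_0 \in X$ and record that $\psi$ is symmetric, since a real-valued hermitian kernel satisfies $\psi(y,x)=\ovl{\psi(x,y)}=\psi(x,y)$. Then I would define
\[
\varphi(x,y) \ov{\mathrm{def}}{=} \tfrac{1}{2}\big(\psi(x,x_0)+\psi(y,x_0)-\psi(x,y)\big), \quad x,y \in X,
\]
and check that $\varphi$ is a (real-valued) positive definite kernel. This is the classical linearisation trick: given scalars $c_1,\dots,c_n$ and points $x_1,\dots,x_n$, one introduces the compensating coefficient $c_0 \ov{\mathrm{def}}{=} -\sum_{k=1}^n c_k$ at the base point $x_0$, so that $c_0+\cdots+c_n=0$, and applies the negative definiteness of $\psi$ to the enlarged system. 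Using $\psi(x_0,x_0)=0$ and the symmetry of $\psi$, the resulting inequality rearranges exactly into $\sum_{i,j=1}^n c_i\ovl{c_j}\varphi(x_i,x_j) \geq 0$. Applying the converse to Example~\ref{Example-kernel-2} to the real-valued kernel $\varphi$ then produces a real Hilbert space $H$ and a map $\alpha \co X \to H$ with $\varphi(x,y)=\la \alpha_x,\alpha_y\ra_H$. A short polarisation computation, again using $\psi(x_0,x_0)=0$, gives $\norm{\alpha_x-\alpha_y}_H^2 = \varphi(x,x)-2\varphi(x,y)+\varphi(y,y) = \psi(x,y)$, which is the desired identity.

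It remains to obtain weak measurability of $\alpha$, and here I would simply invoke Lemma~\ref{Lemma-weak-measurable}: since that lemma is stated for exactly the kernel $(x,y)\mapsto\la\alpha_x,\alpha_y\ra_H=\varphi(x,y)$, I only need $\varphi$ to be $\mathcal{A}\ot\mathcal{A}$-measurable. This is where the explicit formula pays off: $\psi$ itself is $\mathcal{A}\ot\mathcal{A}$-measurable by hypothesis, while the sections $x\mapsto\psi(x,x_0)$ are $\mathcal{A}$-measurable because $x\mapsto(x,x_0)$ is $\mathcal{A}$-$(\mathcal{A}\ot\mathcal{A})$-measurable; pulling these back along the coordinate projections shows $(x,y)\mapsto\psi(x,x_0)$ and $(x,y)\mapsto\psi(y,x_0)$ are $\mathcal{A}\ot\mathcal{A}$-measurable, hence so is the combination $\varphi$. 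Lemma~\ref{Lemma-weak-measurable} then yields that $\alpha$ is weak measurable, completing the proof. The only genuinely delicate point is conceptual rather than computational: the converse to Example~\ref{Example-kernel-2} produces \emph{some} map $\alpha$ with no a priori measurability, so the argument must route all measurability through the scalar kernel $\varphi$ and Lemma~\ref{Lemma-weak-measurable}, which is precisely why the construction is organised around a single explicitly measurable positive definite kernel rather than around $\alpha$ directly.
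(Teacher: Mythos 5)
Your proof is correct and follows essentially the same route as the paper: fix a base point $x_0$, form the kernel $\varphi(x,y)=\tfrac12\big(\psi(x,x_0)+\psi(x_0,y)-\psi(x,y)\big)$, invoke the converse of Example~\ref{Example-kernel-2} (\cite[Proposition 3.2 page 82]{BCR}) to realise $\varphi$ as $\la\alpha_x,\alpha_y\ra_H$, deduce weak measurability of $\alpha$ from the measurability of $\varphi$ via Lemma~\ref{Lemma-weak-measurable}, and conclude by the polarisation computation. The only cosmetic differences are that you prove the positive definiteness of $\varphi$ directly by the linearisation trick where the paper cites \cite[Lemme 2.1 page 74]{BCR}, and that you spell out the measurability of $\varphi$ in more detail; both are fine.
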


\begin{proof}
Fix $x_0 \in X$. Recall that by \cite[Lemma 2.1 p. 74]{BCR84} the function $\varphi \co X \times X \to \mathbb{R}$ defined by 
\begin{equation}
\label{Eq-inter-infty}
\varphi(x,y)
=\frac{1}{2}\big[\psi(x,x_0)+\psi(x_0,y)-\psi(x,y)\big], \quad x,y \in X
\end{equation}  
is a positive definite kernel. Since $\psi$ is measurable, this kernel is measurable. So there exists a real Hilbert space $H$ and a function $\alpha \co X \to H$ satisfying \eqref{Def-varphi} which is weak measurable by Lemma \ref{Lemma-weak-measurable}. Now, for any $x,y \in X$, we have
\begin{align*}
\MoveEqLeft
\norm{\alpha_x - \alpha_y}_H^2            
=\norm{\alpha_x}_H^2+\norm{\alpha_y}_H^2-2 \la \alpha_x,\alpha_y \ra_H
\ov{\eqref{Def-varphi}}{=} \varphi(x,x)+\varphi(y,y)-2\varphi(x,y) \\
&\ov{\eqref{Eq-inter-infty}}{=}\frac{1}{2}\big(\psi(x,x_0)+\psi(x_0,x)-\psi(x,x)\big)+\frac{1}{2}\big(\psi(y,x_0)+\psi(x_0,y)-\psi(y,y)\big)\\
&-\big(\psi(x,x_0)+\psi(x_0,y)-\psi(x,y)\big)
=\psi(x,y).
\end{align*}
\end{proof}

%

%

We introduce the following definition in the spirit of \cite[p. 25]{Mer21}.

\begin{defi}
\label{Def-int-pos-def}
Let $X$ be a measure space. We say that a measurable map $\varphi \co X \times X \to \mathbb{C}$ is an integrally positive definite kernel if for any $\xi \in \L^1(X)$ the function $(x,y) \mapsto \varphi(x,y) \ovl{\xi(x)} \xi(y)$ is integrable on $X \times X$ and 
\begin{equation}
\label{Cartier-int-pos}
\iint_{X \times X} \varphi(x,y) \ovl{\xi(x)} \xi(y) \d x \d y 
\geq 0.
\end{equation}
\end{defi}

Let $\varphi \co X \times X \to \mathbb{C}$ be a measurable positive definite kernel on a $\sigma$-finite measure space $X$. Cartier showed in \cite[p. 302]{Car81} that \eqref{Cartier-int-pos} is satisfied for any measurable function $\xi \co X \to \mathbb{C}$ if the previous integral is absolutely convergent. Moreover, Cartier proved the following result \cite[Corollaire p. 301]{Car81}.

\begin{prop}
\label{Prop-Magic}
Suppose that $X$ is a $\sigma$-finite measure space. Let $\varphi \co X \times X \to \mathbb{C}$ be a measurable and bounded function. Then $\varphi$ is equal almost everywhere to a bounded and measurable positive definite kernel on $X \times X$ if and only if $\varphi$ is an integrally positive definite kernel. 
\end{prop}

\begin{remark} \normalfont
The author naively thought that he proved the ``if'' part of a $\L^2(X)$-variant of this result in a previous version of this paper relying on the version of Mercer's theorem for finite measure spaces stated in \cite[Theorem 3.a.1 p. 145]{Kon86}. Unfortunately, the proof of \cite[Theorem 3.a.1 p. 145]{Kon86} has a large gap\footnote{\thefootnote. It was confirmed by email by the author of the book.}. In particular, the estimate $\sup_n \norm{f_n}_\infty <\infty$ is not proved and seems doubtful and consequently the arguments of convergence of \cite{Kon86} are inexact.  
\end{remark}

%

\begin{remark} \normalfont
It would be interesting to study variants of Definition \ref{Def-int-pos-def} by replacing the space $\L^1(X)$ by another space of functions in the spirit of \cite{HuW75} and references therein. 

\end{remark}

The following is \cite[Th\'eor\`eme p. 283]{Car81}\footnote{\thefootnote. The ``only if'' part is true without the assumption ``with support $X$''.} and would be used in the next section.

\begin{prop}
\label{Prop-Cartier-2}
Let $X$ be a locally compact space equipped with a Radon measure with support $X$ and $\varphi \co X \times X \to \mathbb{C}$ be a continuous function. Then $\varphi$ is a positive definite kernel if and only if for any continuous function $\xi \co X \to \mathbb{C}$ with compact support we have \eqref{Cartier-int-pos}.
\end{prop}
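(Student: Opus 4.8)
The plan is to prove the two implications separately, viewing the integral in \eqref{Cartier-int-pos} as a continuous analogue of the defining sum \eqref{Positive-definite-kernel}: the ``only if'' direction passes from sums to integrals by a Riemann-sum approximation, while the ``if'' direction passes from integrals back to sums by concentrating continuous bumps near finitely many prescribed points.

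For the ``only if'' part, I would assume that $\varphi$ is a positive definite kernel and fix a continuous $\xi \co X \to \C$ with compact support $K \ov{\mathrm{def}}{=} \Supp \xi$. Then $g(x,y)\ov{\mathrm{def}}{=}\varphi(x,y)\ovl{\xi(x)}\xi(y)$ is continuous with compact support contained in $K\times K$, and $\mu(K)<\infty$ since the measure $\mu$ is Radon. Given $\epsilon>0$, uniform continuity of $g$ on the compact set $K\times K$ yields a finite Borel partition $K=\bigsqcup_i A_i$ together with sample points $x_i\in A_i$ such that $g$ oscillates by at most $\epsilon$ on each $A_i\times A_j$. Setting $c_i\ov{\mathrm{def}}{=}\ovl{\xi(x_i)}\mu(A_i)$, the associated Riemann sum satisfies
\[
\sum_{i,j}g(x_i,x_j)\mu(A_i)\mu(A_j)=\sum_{i,j}c_i\ovl{c_j}\varphi(x_i,x_j)\geq 0
\]
by \eqref{Positive-definite-kernel}, while it differs from $\iint_{X\times X} g\,\d x\,\d y$ by at most $\epsilon\,\mu(K)^2$. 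Letting $\epsilon\to 0$ gives \eqref{Cartier-int-pos}. Observe that this direction uses only that $\mu$ is Radon and not that its support is all of $X$, in agreement with the footnote.

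For the ``if'' part, I would assume \eqref{Cartier-int-pos} for every continuous compactly supported $\xi$ and fix $x_1,\dots,x_n\in X$ (pairwise distinct, after merging equal points) together with $c_1,\dots,c_n\in\C$. The idea is to feed into \eqref{Cartier-int-pos} a test function concentrated near the $x_k$. Given $\epsilon>0$, joint continuity of $\varphi$ provides open neighborhoods $U_k\ni x_k$ with $|\varphi(x,y)-\varphi(x_i,x_j)|<\epsilon$ whenever $x\in U_i$ and $y\in U_j$. Using Urysohn's lemma on the locally compact Hausdorff space $X$, I would choose continuous $v_k\geq 0$ with compact support in $U_k$ and $v_k(x_k)=1$; here the hypothesis that $\mu$ has full support is exactly what guarantees $\int v_k\,\d\mu>0$, so I may normalize $u_k\ov{\mathrm{def}}{=}v_k/\int v_k\,\d\mu$ to get $\int u_k\,\d\mu=1$. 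Taking $\xi\ov{\mathrm{def}}{=}\sum_k\ovl{c_k}\,u_k$ and expanding $\ovl{\xi(x)}\xi(y)=\sum_{i,j}c_i\ovl{c_j}\,u_i(x)u_j(y)$, each mixed term obeys $\big|\iint\varphi\,u_i(x)u_j(y)\,\d x\,\d y-\varphi(x_i,x_j)\big|\leq\epsilon$, whence
\[
\Big|\iint_{X\times X}\varphi(x,y)\ovl{\xi(x)}\xi(y)\,\d x\,\d y-\sum_{i,j}c_i\ovl{c_j}\varphi(x_i,x_j)\Big|\leq \epsilon\Big(\sum_{k}|c_k|\Big)^2 .
\]
Since $\iint_{X\times X}\varphi\,\ovl{\xi}\,\xi\,\d x\,\d y\geq 0$ by hypothesis, this forces $\sum_{i,j}c_i\ovl{c_j}\varphi(x_i,x_j)\geq -\epsilon(\sum_k|c_k|)^2$, and letting $\epsilon\to0$ yields \eqref{Positive-definite-kernel}.

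The main obstacle lies in this ``if'' direction: one must manufacture genuine admissible test functions (continuous, compactly supported) that simultaneously approximate point masses at all the $x_k$, and the only place the full-support assumption is used is to normalize these bumps — without it a bump could have zero integral and the scheme would collapse. A secondary technical point, harmless but worth stating carefully, is the Riemann-sum step of the ``only if'' direction on a possibly non-metrizable $X$, where ``uniform continuity on the compact set $K\times K$'' is to be understood with respect to the canonical uniformity of a compact Hausdorff space.
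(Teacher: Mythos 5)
Your proof is correct, but there is nothing internal to compare it against: the paper does not prove Proposition \ref{Prop-Cartier-2} at all, it simply quotes it as Cartier's theorem \cite[Th\'eor\`eme page 283]{Car1}. What you have produced is therefore a self-contained elementary substitute for the citation, and both halves hold up. The ``only if'' half is a Riemann-sum discretization: your bookkeeping $c_i=\ovl{\xi(x_i)}\mu(A_i)$ correctly turns the discretized integral into the quadratic form of \eqref{Positive-definite-kernel}, the error is controlled by $\epsilon\,\mu(K)^2$ using that a Radon measure is finite on compacts, and your caveat about uniform continuity on a possibly non-metrizable $X$ is handled exactly as you say, via the unique uniformity of the compact set $K\times K$ (one disjointifies a finite cover of $K$ by sets small of order the relevant entourage to get the Borel pieces $A_i$). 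The ``if'' half is a standard mollification by normalized Urysohn bumps, and you correctly identify the one place where the full-support hypothesis is indispensable: without it $\int v_k\,\d\mu$ could vanish and the bumps could not be normalized; this matches the paper's footnote that the ``only if'' part alone survives without the support assumption. Two minor points: Urysohn's lemma requires $X$ Hausdorff, which is implicit in the paper's convention for locally compact spaces with Radon measures; and obtaining the neighborhoods $U_k$ requires intersecting, for each $k$, the finitely many neighborhoods coming from continuity of $\varphi$ at every pair $(x_i,x_j)$ -- both are routine. The trade-off between the two routes is the evident one: the citation keeps the paper short and defers to Cartier's more systematic study of measurable covariances, whereas your argument makes the proposition verifiable on the spot and displays precisely how each hypothesis (Radon, continuity, full support) is consumed.
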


\section{A description of Markov semigroups of measurable Schur multipliers}
\label{sec-description}

Let $X$ be a $\sigma$-finite measure space. We say that a function $\varphi \in \L^\infty(X \times X)$ induces a measurable Schur multiplier on $\B(\L^2(X))$ if the map $S^2_X \mapsto \B(\L^2(X))$, $K_{f} \mapsto K_{\varphi f}$ induces a bounded operator from $S^\infty_X$ into $\B(\L^2(X))$. In this case, the operator $S^\infty_X \mapsto \B(\L^2(X))$, $K_{f}\mapsto K_{\varphi f}$ admits by \cite[Lemma A.2.2 p. 360]{BLM04} a unique weak* extension $M_\varphi \co \B(\L^2(X)) \to \B(\L^2(X))$ called the measurable Schur multiplier associated with $\varphi$. It is known that in this case
\begin{equation}
\label{ine-infty}
\norm{\varphi}_{\L^\infty(X \times X)} 
\leq \norm{M_\varphi}_{\B(\L^2(X)) \to \B(\L^2(X))}.
\end{equation}
We refer to the surveys \cite{ToT10} and \cite{Tod15} for more information. See also \cite{Spr04}.

\begin{example} \normalfont
If the set $X=\{1,\ldots,n\}$ is equipped with the counting measure, we can identify the space $\B(\L^2(X))$ with the matrix algebra $\M_n$. Then each operator $K_{f}$ identifies to the matrix $[f(i,j)]$. A Schur multiplier is given by a map $M_\varphi \co \M_n \to \M_n$, $[f(i,j)] \mapsto [\varphi(i,j)f(i,j)]$. 
\end{example}

We say that a measurable Schur multiplier $M_\varphi \co \B(\L^2(X)) \to \B(\L^2(X))$ is selfadjoint if its restriction $S^2_X \mapsto S^2_X$, $K_{f} \mapsto K_{\varphi f}$ on the Hilbert space $S^2_X$ is selfadjoint. See \cite[Section 5]{JMX06} for more information on a related notion for operators acting on von Neumann algebras.

\begin{prop}
\label{Prop-selfadjoint}
Suppose that $X$ is a $\sigma$-finite measure space. A measurable Schur multiplier $M_\varphi \co \B(\L^2(X)) \to \B(\L^2(X))$ is selfadjoint if and only if the function $\varphi$ is real-valued almost everywhere.
\end{prop}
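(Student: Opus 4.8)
The plan is to transport the whole question to the Hilbert space $\L^2(X \times X)$, where the Schur multiplier becomes an ordinary multiplication operator. Recall that the correspondence $f \mapsto K_f$ from \eqref{Def-de-Kf} is a unitary from $\L^2(X \times X)$ onto the Hilbert space $S^2_X$ of Hilbert-Schmidt operators; that is, $\langle K_f, K_g \rangle_{S^2_X} = \langle f, g \rangle_{\L^2(X \times X)}$ for all $f, g \in \L^2(X \times X)$. This is standard and can be recovered from the adjoint formula $(K_f)^* = K_{\check{\ovl{f}}}$, the composition rule \eqref{composition-Hilbert-Schmidt} and the trace formula \eqref{trace-of-trace-class}, since $\langle K_f, K_g\rangle_{S^2_X} = \tr\big((K_g)^* K_f\big) = \int_{X \times X} f \ovl{g}$. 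Under this unitary, the restriction $S^2_X \to S^2_X$, $K_f \mapsto K_{\varphi f}$ of $M_\varphi$ is intertwined with the multiplication operator $m_\varphi \co \L^2(X \times X) \to \L^2(X \times X)$, $f \mapsto \varphi f$, which is bounded precisely because $\varphi \in \L^\infty(X \times X)$. Hence $M_\varphi$ is selfadjoint (in the sense of its restriction to $S^2_X$) if and only if $m_\varphi$ is a selfadjoint operator on $\L^2(X \times X)$.

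So the statement reduces to the classical fact that a multiplication operator by a bounded function on an $\L^2$-space is selfadjoint if and only if the symbol is real-valued almost everywhere. For the ``if'' direction, when $\varphi = \ovl{\varphi}$ almost everywhere one computes directly that $\langle m_\varphi f, g\rangle = \int_{X \times X} \varphi f \ovl{g} = \int_{X \times X} f \ovl{\varphi g} = \langle f, m_\varphi g\rangle$ for all $f, g$, so $m_\varphi = (m_\varphi)^*$. For the ``only if'' direction, I would use that selfadjointness of $m_\varphi$ forces $\langle m_\varphi f, f\rangle = \int_{X\times X}\varphi\, |f|^2$ to be real for every $f \in \L^2(X \times X)$ (a bounded operator on a complex Hilbert space is selfadjoint iff its quadratic form is real-valued), hence $\int_{X \times X} \mathrm{Im}(\varphi)\, |f|^2 = 0$ for all such $f$.

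It then remains to deduce $\mathrm{Im}(\varphi) = 0$ almost everywhere from the vanishing of these integrals. Since $X$ is $\sigma$-finite, so is $X \times X$, and the nonnegative functions of the form $|f|^2$ with $f \in \L^2(X \times X)$ are exactly the nonnegative elements of $\L^1(X \times X)$ (take $f = \sqrt{h}$); testing the bounded real function $\mathrm{Im}(\varphi)$ against all of them—equivalently, exploiting the duality $(\L^1)^* = \L^\infty$ together with characteristic functions of sets of finite measure—yields $\mathrm{Im}(\varphi) = 0$ almost everywhere, i.e. $\varphi$ is real-valued almost everywhere. The only points requiring genuine care are this last measure-theoretic step and the clean verification that the $S^2_X$-restriction is unitarily equivalent to $m_\varphi$; the $\sigma$-finiteness hypothesis on $X$ is exactly what makes the duality argument go through.
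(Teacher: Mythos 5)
Your proposal is correct and follows essentially the same route as the paper: both transport the problem through the unitary $f \mapsto K_f$ identifying $\L^2(X \times X)$ with $S^2_X$, reduce selfadjointness of $M_\varphi$ to the corresponding statement about multiplication by $\varphi$ on $\L^2(X \times X)$, and conclude that $\varphi = \ovl{\varphi}$ almost everywhere by $\L^1$--$\L^\infty$ duality, which is where the $\sigma$-finiteness of $X$ enters. The only cosmetic difference is that the paper tests the bilinear trace identity against products $f\ovl{g}$, which exhaust $\L^1(X \times X)$, whereas you invoke the quadratic-form criterion for selfadjointness and test $\mathrm{Im}(\varphi)$ against the nonnegative $\L^1$ functions $|f|^2$; the two steps carry the same content.
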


\begin{proof}
The selfadjointness property is equivalent to
$$
\tr\big(M_\varphi(K_f)K_g^*\big)
=\tr\big(K_f(M_\varphi(K_g))^*\big), \quad \text{i.e.} \quad \tr\big(K_{\varphi f}K_g^*\big)
=\tr\big(K_f(K_{\varphi g})^*\big)
$$
for any $f,g \in \L^2(X \times X)$. Since $\L^2(X \times X) \to S^2_X$, $f \mapsto K_f$ is an isometry from the Hilbert space $\L^2(X \times X)$ onto the Hilbert space $S^2_X$ of Hilbert-Schmidt operators on $\L^2(X)$, that means that
$$
\iint_{X \times X} \varphi(x,y)f(x,y)\ovl{g(x,y)} \d x\d y
=\iint_{X \times X} f(x,y) \ovl{\varphi(x,y)g(x,y)} \d x\d y.
$$
Note that $f\ovl{g}$ belongs to $\L^1(X \times X)$ and that each function of $\L^1(X \times X)$ has this form. By duality, we deduce that $\varphi=\ovl{\varphi}$ almost everywhere.
\end{proof}

We will show that a measurable Schur multiplier $M_\varphi \co \B(\L^2(X)) \to \B(\L^2(X))$ is unital if and only if its restriction on $S^1_X$ is trace preserving. We need some information on Arens products to prove some preliminary result. Let $A$ be a Banach algebra. Recall that we can equip its bidual $A^{**}$ with two natural products, the left and right Arens poducts defined in \cite[pp. 78-79]{BLM04} on $A^{**}$. We say $A$ is Arens regular if these products coincides on $A^{**}$. Note that the algebra $S^\infty_X$ of compact operators is Arens regular by \cite[Theorem 9.1.39 p. 838]{Pal01}.

\begin{prop}
Let $X$ be a $\sigma$-finite measure space. The map $S^2_X \to S^2_X$, $K_f \mapsto K_{\check{f}}$ extends to an involutive normal $*$-antiautomorphism $R \co \B(\L^2(X)) \to \B(\L^2(X))$.
\end{prop}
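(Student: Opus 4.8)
The plan is to realize $R$ concretely as a twist of the adjoint by the canonical complex conjugation of $\L^2(X)$, and then to read off every required property from this formula. Let $J \co \L^2(X) \to \L^2(X)$ be the pointwise complex conjugation $J\xi \ov{\mathrm{def}}{=} \ovl{\xi}$. This is a conjugate-linear isometric involution, i.e. $J(\lambda \xi)=\ovl{\lambda}\,J\xi$, $\norm{J\xi}_2=\norm{\xi}_2$ and $J^2=\Id$. I would then define
\[
R(T) \ov{\mathrm{def}}{=} J T^* J, \quad T \in \B(\L^2(X)).
\]
Since $T \mapsto T^*$ and $A \mapsto JAJ$ are each conjugate-linear (the latter because each occurrence of $J$ contributes a conjugation), their composition $R$ is complex-linear, and it is isometric since $\norm{R(T)}=\norm{T^*}=\norm{T}$. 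This already gives a well-defined bounded linear operator on $\B(\L^2(X))$.

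Next I would check that $R$ extends the prescribed map on $S^2_X$. For $g \in \L^2(X \times X)$ a direct kernel computation gives $J K_g J = K_{\ovl{g}}$, since for $\xi \in \L^2(X)$ one has $(JK_gJ\xi)(x)=\ovl{\int_X g(x,y)\ovl{\xi(y)}\d y}=\int_X \ovl{g(x,y)}\xi(y)\d y$. Combining this with the identity $(K_f)^*=K_{\check{\ovl{f}}}$ recalled after \eqref{Def-de-Kf}, we obtain
\[
R(K_f) = J (K_f)^* J = J K_{\check{\ovl{f}}} J = K_{\ovl{\check{\ovl{f}}}} = K_{\check{f}}, \quad f \in \L^2(X \times X),
\]
because $\ovl{\check{\ovl{f}}}(x,y)=\ovl{\ovl{f(y,x)}}=f(y,x)=\check{f}(x,y)$. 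Hence $R$ restricts to $K_f \mapsto K_{\check f}$ on $S^2_X$, as required.

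I would then verify the algebraic structure. Antimultiplicativity follows by inserting $J^2=\Id$: for $S,T \in \B(\L^2(X))$,
\[
R(ST) = J (ST)^* J = J T^* S^* J = (J T^* J)(J S^* J) = R(T)\,R(S).
\]
For the $*$-property, one first records that $J$ is a self-adjoint antiunitary, whence $(J A J)^*=J A^* J$ for every $A$; applied to $A=T^*$ this gives $R(T)^*=(JT^*J)^*=JTJ=J(T^*)^*J=R(T^*)$. Finally $R^2(T)=J(JT^*J)^*J=J(JTJ)J=J^2 T J^2=T$, so $R$ is involutive. Thus $R$ is an involutive $*$-antiautomorphism of $\B(\L^2(X))$.

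It remains to establish normality, which I expect to be the only nonroutine point. I would argue via the predual. The map $A \mapsto JAJ$ is implemented by the antiunitary $J$, so it preserves $S^1_X$ isometrically, and likewise the involution preserves $S^1_X$; hence $R$ restricts to a bounded map $R\co S^1_X \to S^1_X$. A matrix computation in a real orthonormal basis of $\L^2(X)$ (one fixed by $J$) shows that $R$ is nothing but transposition, from which the trace-duality identity $\tr(R(T)\rho)=\tr\big(T\,R(\rho)\big)$ follows for all $T \in \B(\L^2(X))$ and $\rho \in S^1_X$. This exhibits $R$ on $\B(\L^2(X))=(S^1_X)^*$ as the Banach-space adjoint of the bounded map $R|_{S^1_X}$, and adjoint maps are automatically weak* continuous; therefore $R$ is normal. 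As an alternative consistent with the Arens-regularity discussion above, one could instead note that $K_f \mapsto K_{\check f}$ is a $*$-antiautomorphism of the C$^*$-algebra $S^\infty_X$ and pass to the bidual $\B(\L^2(X))=(S^\infty_X)^{**}$, the Arens regularity of $S^\infty_X$ ensuring that the bidual map is antimultiplicative for the product of $\B(\L^2(X))$; but the conjugation approach seems more transparent.
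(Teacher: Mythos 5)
Your proof is correct, but it takes a genuinely different route from the paper. The paper never writes down a global formula for $R$: it verifies antimultiplicativity at the level of kernels on $S^2_X$ (via the composition formula \eqref{composition-Hilbert-Schmidt}), uses the identity $(K_f)^*=K_{\check{\ovl{f}}}$ to see that $K_{\check f}$ is the Banach adjoint of $K_f$ and hence has the same operator norm, extends by density to an involutive $*$-antiautomorphism of the C$^*$-algebra $S^\infty_X$, and then reaches $\B(\L^2(X))$ abstractly through the bidual: the weak* continuous extension of \cite[Lemma A.2.2]{BLM}, the fact that it is an antiautomorphism for the Arens product \cite[2.5.5]{BLM}, and the identification $(S^\infty_X)^{**}\cong \B(\L^2(X))$ from Palmer's theorem (Arens regularity of the compacts) --- this is the route you mention only as an alternative at the end. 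Your argument replaces all of that machinery with the explicit antiunitary implementation $R(T)=JT^*J$, where $J$ is pointwise conjugation: the algebraic properties ($R$ linear, isometric, antimultiplicative, $*$-preserving, involutive, extending $K_f\mapsto K_{\check f}$) become one-line computations, and normality follows cleanly from the trace-duality identity $\tr(R(T)\rho)=\tr(T\,R(\rho))$, which exhibits $R$ as the Banach adjoint of its restriction to the predual $S^1_X$ and hence as weak* continuous. What each approach buys: yours is more elementary and self-contained, produces a concrete formula (from which positivity, isometry on every Schatten class, and uniqueness of the normal extension are immediate), and avoids any appeal to Arens products; the paper's is softer in that it requires no insight beyond the kernel computation --- the bidual formalism does the extension automatically --- and it dovetails with the Arens-regularity background the paper has already set up for other purposes. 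Both are complete; your predual argument is the standard one and all the identities you use ($JK_gJ=K_{\ovl g}$, $(JAJ)^*=JA^*J$, the existence of a $J$-fixed orthonormal basis of real-valued functions) are easily checked.
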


\begin{proof} 
For any $f \in \L^2(X \times X)$, we let $R(K_f) \ov{\mathrm{def}}{=} K_{\check{f}}$. On the one hand, for any $f,g \in \L^2(X \times X)$, the Hilbert-Schmidt operator $K_{f}K_g \co \L^2(X) \to \L^2(X)$ is associated to the function $h$ defined by \eqref{composition-Hilbert-Schmidt} and  for almost all $(x,y) \in X \times X$ we have 
$$
\check{h}(x,y)= \int_X f(y,z)g(z,x) \d z.
$$ 
On the other hand, by \eqref{composition-Hilbert-Schmidt} the Hilbert-Schmidt operator $K_{\check{g}}K_{\check{f}} \co \L^2(X) \to \L^2(X)$ is associated to the function 
$$
(x,y) \mapsto \int_X \check{g}(x,z)\check{f}(z,y) \d z=\int_X f(y,z) g(z,x) \d z.
$$ 
We obtain that $R(K_fK_g)=R(K_g)R(K_f)$. Moreover, we have 
$$
(R(K_{f}))^*
=\big(K_{\check{f}}\big)^*
\ov{\eqref{Adjoint}}{=} K_{\ovl{f}}
=R(K_{\check{\ovl{f}}})
\ov{\eqref{Adjoint}}{=} R((K_f)^*).
$$
We conclude that the map $S^2_X \to S^2_X$, $K_f \mapsto K_{\check{f}}$ is an involutive $*$-antiautomorphism. From the formula $(K_f)^*=K_{\check{\ovl{f}}}$, it is not difficult to see that the \textit{Banach} adjoint of $K_f$ is $K_{\check{f}}$. Hence $\norm{K_f}_{\B(\L^2(X))}=\norm{K_{\check{f}}}_{\B(\L^2(X))}$. We deduce an involutive $*$-antiautomorphism $R \co S^\infty_X \to S^\infty_X$. By \cite[Lemma A.2.2 p. 360]{BLM04}, we deduce a unique weak* continuous extension $R \co (S^\infty_X)^{**} \to \B(\L^2(X))$. Moreover, by \cite[2.5.5 p. 79]{BLM04} this map is a $*$-antiautomorphism where the double dual $(S^\infty_X)^{**}$ is equipped with the Arens product. Finally, note that by \cite[Theorem 9.1.39 p. 838]{Pal01} the algebra $(S^\infty_X)^{**}$ identifies with the algebra $\B(\L^2(X))$.
\end{proof}

We introduce the following duality bracket
\begin{equation}
\label{Duality-bracket}
\langle z,y \rangle_{\B(\L^2(X)), S^1_X}
\ov{\mathrm{def}}{=} \tr(R(z)y), \quad z \in \B(\L^2(X)), y \in S^1_X.
\end{equation}

\begin{prop}
Let $X$ be a $\sigma$-finite measure space. The preadjoint $(M_{\varphi})_* \co S^1_X\to S^1_X$ of a measurable Schur multiplier $M_{\varphi} \co \B(\L^2(X)) \to \B(\L^2(X))$ coincides with its restriction on $S^1_X$, i.e. for any $z \in \B(\L^2(X))$ and $y \in S^1_X$ we have
\begin{equation}
\label{auto-adjoint}
\big\langle M_{\varphi}(z),y \big\rangle_{\B(\L^2(X)), S^1_X}
=\big\langle z,M_{\varphi}(y) \big\rangle_{\B(\L^2(X)), S^1_X}.
\end{equation}
\end{prop}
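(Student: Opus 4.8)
The plan is to obtain the identity from the abstract existence of a preadjoint, which is then pinned down by an explicit computation on a dense subspace. Since $M_\varphi \co \B(\L^2(X)) \to \B(\L^2(X))$ is by construction weak* continuous and $R$ is a normal $*$-antiautomorphism, the bracket \eqref{Duality-bracket} still realizes $\B(\L^2(X))$ as the dual of $S^1_X$ with its usual weak* topology. Hence $M_\varphi$ admits a bounded preadjoint $(M_\varphi)_* \co S^1_X \to S^1_X$ characterized by $\langle M_\varphi(z),y\rangle = \langle z,(M_\varphi)_*(y)\rangle$ for all $z \in \B(\L^2(X))$ and $y \in S^1_X$. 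It therefore suffices to show that $(M_\varphi)_*(y) = M_\varphi(y)$ for $y$ ranging over the dense subspace of $S^1_X$ consisting of operators $K_g$ with $g \in \L^2(X \times X)$ and $K_g$ trace class (for instance finite-rank operators); the full identity \eqref{auto-adjoint} then follows by density, boundedness of $(M_\varphi)_*$, and the defining relation of the preadjoint.

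The key computation uses $R(K_f) = K_{\check{f}}$ together with the trace formula $\tr(K_aK_b) = \iint_{X \times X} a(x,y)b(y,x)\,\d x\,\d y$, which follows from \eqref{composition-Hilbert-Schmidt} and \eqref{trace-of-trace-class}. For $z = K_f$ with $f \in \L^2(X\times X)$ we have $M_\varphi(K_f) = K_{\varphi f}$ and $R(K_{\varphi f}) = K_\psi$ with $\psi(x,y) = \varphi(y,x)f(y,x)$, so that after a change of variables
\begin{equation*}
\langle M_\varphi(K_f),K_g\rangle = \tr(K_\psi K_g) = \iint_{X\times X} \varphi(x,y)f(x,y)g(x,y)\,\d x\,\d y.
\end{equation*}
On the other hand, writing $w \ov{\mathrm{def}}{=} (M_\varphi)_*(K_g) \in S^1_X \subseteq S^2_X$ as $w = K_h$ for some $h \in \L^2(X\times X)$, the same trace formula gives $\langle K_f,K_h\rangle = \tr(K_{\check{f}}K_h) = \iint_{X\times X} f(x,y)h(x,y)\,\d x\,\d y$. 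Equating the two expressions for every $f \in \L^2(X \times X)$ forces $h = \varphi g$ almost everywhere, whence $w = K_{\varphi g} = M_\varphi(K_g)$.

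This yields $(M_\varphi)_* = M_\varphi|_{S^1_X}$, and in particular $M_\varphi(S^1_X) \subseteq S^1_X$, so that \eqref{auto-adjoint} is exactly the definition of the preadjoint. The delicate point, which is where the argument must be handled with care, is that $M_\varphi(K_g) = K_{\varphi g}$ is a priori only Hilbert--Schmidt: from $\varphi \in \L^\infty(X\times X)$ and $g \in \L^2(X\times X)$ one only gets $\varphi g \in \L^2(X\times X)$, and the trace-classness of $K_{\varphi g}$ is not evident. The scheme above circumvents this by identifying $K_{\varphi g}$ with the preadjoint image $(M_\varphi)_*(K_g)$, which lies in $S^1_X$ by construction, so that membership in $S^1_X$ emerges as a consequence rather than being assumed.
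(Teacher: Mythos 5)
Your proof is correct, and its computational core --- evaluating both pairings through $R(K_f)=K_{\check f}$ and the trace formula for a product of Hilbert--Schmidt operators, then identifying kernels --- is the same as the paper's. Where you genuinely differ is the logical wrapper. The paper verifies $\langle M_\varphi(K_f),K_g\rangle = \langle K_f, K_{\varphi g}\rangle$ directly for $K_f,K_g\in S^1_X$ and then ``concludes by density'' in the variable $z$; this leaves implicit precisely the point you isolate, namely that $M_\varphi(y)$ is again trace class for $y\in S^1_X$, without which the right-hand bracket in \eqref{auto-adjoint} is not even defined. You instead start from the abstract existence of the preadjoint $(M_\varphi)_*\co S^1_X\to S^1_X$ --- legitimate, since $R$ is normal and involutive, so the twisted pairing \eqref{Duality-bracket} implements the usual duality between $S^1_X$ and $\B(\L^2(X))$ --- and then pin it down on kernels by nondegeneracy of the bilinear form $(f,h)\mapsto \iint_{X \times X} fh$ on $\L^2(X\times X)$, so that $M_\varphi(S^1_X)\subseteq S^1_X$ comes out as a conclusion rather than a hidden hypothesis; that is a real gain in rigor over the paper's version. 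Two small points. First, since every element of $S^1_X$ is Hilbert--Schmidt, your ``dense subspace'' of trace-class operators with $\L^2$ kernels is in fact all of $S^1_X$, so the closing density step in $y$ is unnecessary --- fortunately, because as stated it is the one fragile step: boundedness of $(M_\varphi)_*$ alone does not let you pass to the limit on the $M_\varphi(y)$ side (you would additionally need that $S^1$-convergence implies weak* convergence together with the weak* continuity of $M_\varphi$). Second, the trace formula $\tr(K_aK_b)=\iint_{X \times X} a(x,y)b(y,x)\,\d x\,\d y$ is more safely justified by the isometry $\L^2(X\times X)\cong S^2_X$, writing $\tr(K_aK_b)=\langle K_a,(K_b)^*\rangle_{S^2_X}$, as the paper does, rather than by \eqref{trace-of-trace-class}, which the paper states only for positive operators with continuous kernels on locally compact spaces.
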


\begin{proof}
Recall that the map $\L^2(X \times X) \to S^2_X$, $f \mapsto K_f$ is an isometry from the Hilbert space $\L^2(X \times X)$ onto the Hilbert space $S^2_X$ of Hilbert-Schmidt operators on $\L^2(X)$. For any $K_f,K_g \in S^1_X$, we deduce that
\begin{align*}
\MoveEqLeft
\big\langle M_{\varphi}(K_f),K_g \big\rangle_{\B(\L^2(X)), S^1_X}
=\big\langle K_{\varphi f},K_g \big\rangle_{\B(\L^2(X)), S^1_X} 
\ov{\eqref{Duality-bracket}}{=} \tr(R(K_{\varphi f})K_g) 
=\tr(K_{\check{\varphi} \check{f}}K_g) \\
&=\iint_{X \times X} \varphi(y,x)f(y,x) g(y,x) \d x\d y.
\end{align*} 
Moreover, we have
\begin{align*}
\MoveEqLeft
\big\langle K_f,K_{\varphi g} \big\rangle_{\B(\L^2(X)), S^1_X}            
\ov{\eqref{Duality-bracket}}{=} \tr(R(K_f) K_{\varphi g})=\tr(K_{\check{f}} K_{\varphi g}) 
=\iint_{X \times X} f(y,x)\varphi(y,x) g(y,x) \d x\d y.
\end{align*} 
We conclude by density.
\end{proof}

Now, we prove the following equivalence by a standard argument. 

\begin{prop}
\label{Prop-trace-unital}
Suppose that $X$ is a $\sigma$-finite measure space. A measurable Schur multiplier $M_\varphi \co \B(\L^2(X)) \to \B(\L^2(X))$ is unital if and only if it is trace preserving when restricted to the space $S^1_X$.
\end{prop}

\begin{proof}
It is elementary to check that an $*$-antiautomorphism is necessarily unital. If the measurable Schur multiplier $M_\varphi \co \B(\L^2(X)) \to \B(\L^2(X))$ is unital then for any $y \in S^1_X$ we have $\big\langle M_{\varphi}(1),y \big\rangle_{\B(\L^2(X)), S^1_X}
\ov{\eqref{auto-adjoint}}{=} \big\langle 1,M_{\varphi}(y) \big\rangle_{\B(\L^2(X)), S^1_X}$, 
that is $\tr y=\tr M_{\varphi}(y)$. We conclude that $M_\varphi$ is trace preserving.

If the measurable Schur multiplier $M_\varphi \co \B(\L^2(X)) \to \B(\L^2(X))$ is trace preserving then for any $x \in S^1_X$ we have
\begin{align*}
\MoveEqLeft
\tr (R(M_\varphi(1))x)
\ov{\eqref{Duality-bracket}}{=} \big\langle M_{\varphi}(1),x \big\rangle_{\B(\L^2(X)), S^1_X} 
\ov{\eqref{auto-adjoint}}{=} \big\langle 1,M_{\varphi}(x) \big\rangle_{\B(\L^2(X)), S^1_X} \\
&\ov{\eqref{Duality-bracket}}{=} \tr(R(1)M_\varphi(x))
=\tr(R(1)x).
\end{align*}
We conclude by duality that $M_\varphi(1)=1$.
\end{proof}

It is proved in \cite[Theorem 3.5]{Duq22} that a measurable Schur multiplier $M_\varphi \co \B(\L^2(X)) \to \B(\L^2(X))$ is unital if and only if $\varphi(x,x)=1$ almost everywhere on $X$. We deduce the following result.

\begin{cor}
\label{cor-trace}
Suppose that $X$ is a $\sigma$-finite measure space. Let $M_\varphi \co \B(\L^2(X)) \to \B(\L^2(X))$ be a measurable Schur multiplier. The following conditions are equivalent.
\begin{enumerate}
	\item $M_\varphi$ is trace preserving when restricted to the space $S^1_X$.
	\item $M_\varphi$ is unital.
	\item $\varphi(x,x)=1$ almost everywhere on $X$.
\end{enumerate}
\end{cor}

In the following statement, we say that $\varphi$ induces a (completely) positive measurable Schur multiplier if the linear map $M_\varphi \co \B(\L^2(X)) \to \B(\L^2(X))$ is (completely) positive.

\begin{prop}
\label{Prop-Carac-cp-Schur multiplier}
Let $X$ be a $\sigma$-finite measure space. Let $\varphi \in \L^\infty(X \times X)$. Consider the following properties.
\begin{enumerate}
	\item $\varphi$ induces a completely positive measurable Schur multiplier
	
	\item $\varphi$ induces a positive measurable Schur multiplier

	\item the function $\varphi$ is an integrally positive definite kernel.
\end{enumerate}
We have $1 \iff 2 \Rightarrow 3$. 
\end{prop}

\begin{proof}
1. $\Rightarrow$ 2. It is obvious.

2. $\Rightarrow$ 1. That is essentially a standard application of \cite[Lemma 4.3]{STT14}. See also \cite[Exercise 4.15]{Tod14}.

2. $\Rightarrow$ 3. Let $\xi \in \L^1(X)$. We can write $\xi=\xi_1\xi_2$ for some functions $\xi_1,\xi_2 \in \L^2(X)$. Note that the operator $K_{\ovl{\xi_1} \ot \xi_1} \co \L^2(X) \to \L^2(X)$ is positive. Indeed,  for any $\eta \in \L^2(X)$ we have 
\begin{align*}
\MoveEqLeft
\big\la K_{\ovl{\xi_1} \ot \xi_1}(\eta),\eta \big\ra_{\L^2(X)}       
\ov{\eqref{Def-de-Kf}}{=} \bigg(\int_X \xi_1 \eta \bigg)\big\la \ovl{\xi_1} ,\eta \big\ra_{\L^2(X)} \\
&=\ovl{\big\la \ovl{\xi_1} ,\eta \big\ra_{\L^2(X)}} \big\la \ovl{\xi_1} ,\eta \big\ra_{\L^2(X)}
=\big|\big\la \ovl{\xi_1} ,\eta \big\ra_{\L^2(X)} \big|^2
\geq 0.
\end{align*}
We deduce that $\la M_\varphi(K_{\ovl{\xi_1} \ot \xi_1})\xi_2,\xi_2 \ra_{\L^2(X)} \geq 0$. Finally, it suffices to observe that
\begin{align*}
\MoveEqLeft
\big\la M_\varphi(K_{\ovl{\xi_1} \ot \xi_1})\xi_2,\xi_2 \big\ra_{\L^2(X)}         
=\Big\la K_{\varphi(\ovl{\xi_1} \ot \xi_1)} \xi_2,\xi_2 \Big\ra_{\L^2(X)} 
=\int_X \big(K_{\varphi(\ovl{\xi_1} \ot \xi_1)} \xi_2\big)(x)\ovl{\xi_2(x)} \d x \\
&\ov{\eqref{Def-de-Kf}}{=} \int_X \bigg(\int_X \varphi(x,y)\ovl{\xi_1(x)}  \xi_1(y)\xi_2(y)\d y\bigg)\ovl{\xi_2(x)} \d x \\
&=\iint_{X \times X} \varphi(x,y) \ovl{\xi_1(x)} \xi_1(y)\xi_2(y) \ovl{\xi_2(x)} \d x \d y 
=\iint_{X \times X} \varphi(x,y) \ovl{\xi(x)} \xi(y) \d x \d y. 
\end{align*}
So \eqref{Cartier-int-pos} is satisfied.
\end{proof}

\begin{remark} \normalfont
Let $X$ be a locally compact space equipped with a Radon measure with full support. Let $\varphi \in \L^\infty(X \times X)$ be a function which induces a measurable Schur multiplier $M_\varphi \co \B(\L^2(X)) \to \B(\L^2(X))$. The implication ``3'' implies 1'' is true. Suppose that $\varphi$ is an integrally positive definite kernel. By Proposition \ref{Prop-Magic}, we can suppose that $\varphi$ is equal to a bounded and measurable positive definite kernel on $X \times X$. Let $K_f \co \L^2(X) \to \L^2(X)$ be a positive Hilbert-Schmidt operator with a \textit{continuous} symbol $f \co X \times X \to \mathbb{C}$. Then the inequality 
$$
\iint_{X \times X} f(x,y) \ovl{\xi(x)} \xi(y) \d x \d y 
\geq 0
$$ 
is clearly satisfied for any $\xi \in \L^2(X)$, in particular for any function $\xi \in \mathrm{C}_c(X)$. By Proposition \ref{Prop-Cartier-2}, we deduce that $f$ is the class of a positive definite kernel. Using the stability under pointwise product of the set of definite positive kernels \cite[Theorem 1.12 p. 69]{BCR84}, we see that the function $\varphi f$ of $\L^2(X \times X)$ is also the class of a positive definite kernel. By the observation following Definition \ref{Def-int-pos-def}, we deduce that for any $\xi \in \L^2(X)$ we have
$$
\iint_{X \times X} \varphi(x,y)f(x,y) \ovl{\xi(x)} \xi(y) \d x \d y 
\geq 0.
$$ 
Consequently $M_\varphi(K_f)=K_{\varphi f}$ is a positive operator on the Hilbert space $\L^2(X)$. We conclude with Kaplansky's theorem using the cone of positive Hilbert-Schmidt operators with continuous kernels. 
\end{remark}

\begin{remark} \normalfont
\label{Remark-answer}
Let $X$ be a locally compact space equipped with a Radon measure with full support. Here, we explain how to give an answer to \cite[Question 4.13 (ii)]{STT14}. Consider a positive continuous Schur multiplier $M_\varphi \co \B(\L^2(X)) \to \B(\L^2(X))$, i.e. a measurable Schur multiplier with a \textit{continuous} symbol $\varphi \co X \times X \to \mathbb{C}$. By Proposition \ref{Prop-Carac-cp-Schur multiplier}, the map $\varphi$ is an integrally positive definite kernel. In particular, for any continuous function $ \xi \co X \to \mathbb{C}$ with compact support, we have \eqref{Cartier-int-pos}. By Proposition \ref{Prop-Cartier-2}, we infer that the continuous function $\varphi$ is a positive definite kernel. Consequently, by \cite[p. 82]{BCR84} there exists a Hilbert space $H$ and some map $\alpha \co X \to H$ satisfying \eqref{Def-varphi}. Since the function $\varphi$ is continuous, we conclude by the (classical) result \cite[Lemme p. 279]{Car81}\footnote{\thefootnote. If \eqref{Def-varphi} is satisfied on a topological space $X$ then $\varphi$ is continuous if and only if $\alpha$ is continuous.} that the map $\alpha \co X \to H$ is necessarily continuous (here the Hilbert space $H$ is equipped with its usual topology). Since $\varphi \in \L^\infty(X \times X)$, the function $x \mapsto \varphi(x,x)=\la \alpha_x,\alpha_x \ra_H$ is bounded. We conclude that $\alpha$ is bounded. 

With sharp contrast, we think that the  answer to the similar question \cite[Question 4.13 (i)]{STT14} for (not necessarily positive) continuous Schur multipliers is negative. 
\end{remark}


The following is a generalization of \cite[Proposition 5.4]{Arh13a}. 
Recall that in this paper the weak* continuity of a semigroup $(T_t)_{t \geq 0}$ means that the map $\R^+ \to \mathbb{C}$, $t \mapsto \la T_t(x),y \ra_{\B(\L^2(X)),S^1_X}$ is continuous for any $x \in \B(\L^2(X))$ and any $y \in S^1_X$.

\begin{prop}
\label{Prop-description-Schur-mult-cp}
\begin{enumerate}
	\item Let $X$ be a second countable Radon measure space. If $(T_t)_{t \geq 0}$ is a weak* continuous semigroup of selfadjoint unital completely positive measurable Schur multipliers on $\B(\L^2(X))$ with continuous symbols then there exists a real Hilbert space $H$ and a continuous function $\alpha \co X \to H$ such that the symbol of $T_t$ is
\begin{equation}
\label{symbol-semigroup-Schur}
\phi_t(x,y)
=\e^{-t\norm{\alpha_x-\alpha_y}_{H}^2}, \quad \text{for almost all }x,y \in X.	
\end{equation}	
\item Let $X$ be a (complete) $\sigma$-finite measure space such that  If $(T_t)_{t \geq 0}$ is a weak* continuous semigroup of selfadjoint unital completely positive measurable Schur multipliers on $\B(\L^2(X))$ then there exists a real Hilbert space $H$ and a weak measurable function $\alpha \co X \to H$ such that the symbol of $T_t$ is given by \eqref{symbol-semigroup-Schur}.
\end{enumerate}
\end{prop}

\begin{proof}
1. For any $t \geq 0$, let $\phi_t \co X \times X \to \C$ be the continuous symbol of the Schur multiplier $T_t$. Since the map $T_t$ is selfadjoint, the function $\phi_t$ is real-valued by Proposition \ref{Prop-selfadjoint}. By Corollary \ref{cor-trace}, we have $\phi_t(x,x)=1$ for any $x \in X$ and any $t \geq 0$. By Proposition \ref{Prop-Carac-cp-Schur multiplier}, each $\phi_t$ is an integrally positive definite kernel. We deduce by Proposition \ref{Prop-Cartier-2} that each $\phi_t$ is a positive definite kernel. Finally, we have $\phi_0=1$. 

For any $t,t' \geq 0$ and any $f \in \L^2(X \times X)$, the relation $T_tT_{t'}(K_{f})=T_{t+t'}(K_f)$ gives the equality $K_{\phi_{t}\phi_{t'} f}=K_{\phi_{t+t'}f}$. We infer that $\phi_t\phi_{t'}f=\phi_{t+t'}f$ in the space $\L^2(X \times X)$. It is apparent that $(\phi_t)_{t \geq 0}$ defines a semigroup $(\Mult_t)_{t \geq 0}$ of multiplication operators acting on $\L^2(X \times X)$ defined by
\begin{equation}
\label{Def-de-Mult}
\Mult_t(f)
\ov{\mathrm{def}}{=} \phi_tf, \quad t \geq 0, f \in \L^2(X \times X).
\end{equation}
Note that \eqref{ine-infty} implies that $(\Mult_t)_{t \geq 0}$ is a contraction semigroup. We will show the weak continuity of this semigroup. For any functions $f,g \in \L^2(X \times X)$ such that $K_g \in S^1_X$, we have using the weak* continuity of the semigroup $(T_t)_{t \geq 0}$
\begin{align*}
\MoveEqLeft
\big\la \Mult_t(f),g \big\ra_{\L^2(X \times X)}         
\ov{\eqref{Def-de-Mult}}{=} \la \phi_tf,g \ra_{\L^2(X \times X)} 
=\big\la K_{\phi_t f}, K_g \big\ra_{S^2_X} \\
&=\big\la T_{t}(K_f), K_g \big\ra_{\B(\L^2(X)), S^1_X} 
\xra[t \to 0]{} \la K_f, K_g \ra_{\B(\L^2(X)), S^1_X} 
=\la f,g \ra_{\L^2(X \times X)}
\end{align*}
(here, we make a slight abuse of notation for the definition of the brackets). Note that the set of functions $g \in \L^2(X \times X)$ such that $K_g \in S^1_X$ is dense in $\L^2(X \times X)$ since $S^1_X$ is dense in the space $S^2_X$. Now, with \cite[2.71 (a) p. 234]{Meg98}, it is clear (using the contractivity of the semigroup) that this semigroup is weak continuous. By \cite[Theorem 5.8 p. 40]{EnN00}, this semigroup is even strongly continuous. By \cite[Proposition 4.12 p. 32]{EnN00} there exists a measurable function $\psi \co X \times X \to \R$ such that for almost all $x,y \in X$
$$
\phi_t(x,y)
=\e^{-t\psi(x,y)}, \quad t \geq 0.
$$ 
We deduce that for almost all $x,y \in X$ we have if $t>0$
$$
\psi(x,y)
=-\frac{1}{t}\log \phi_t(x,y).
$$
Since $\phi_t$ is continuous, we can suppose that $\psi$ is continuous. Moreover, for any $x \in X$ we have 
$$
\psi(x,x)
=-\frac{1}{t}\log \phi_t(x,x)
=0.
$$ 
By Schoenberg's Theorem (Theorem \ref{Th-Schoenberg}), the function $\psi$ defines a real-valued negative definite kernel which vanishes on the diagonal of $X \times X$. Finally, we use the characterization of continuous real-valued negative definite kernels of \cite[Theorem C.2.3]{BHV08}.

2. For any $t \geq 0$, let $\phi_t \co X \times X \to \C$ be the symbol of $T_t$. Since the map $T_t$ is selfadjoint, the function $\phi_t$ is real-valued almost everywhere by Proposition \ref{Prop-selfadjoint}. Consequently we can suppose that each function $\phi_t$ is real-valued. By Proposition \ref{Prop-Carac-cp-Schur multiplier}, each $\phi_t$ is an  integrally positive definite kernel. Consequently, we can suppose by Proposition \ref{Prop-Magic} that the function $\phi_t$ is a real-valued positive negative kernel and bounded (by 1).  

By Corollary \ref{cor-trace}, we have $\phi_t(x,x)=1$ for almost all $x \in X$ and any $t \geq 0$. For any $t>0$, let $A_t$ be the subset of $X$ such that $\phi_t(x,x) \not=1$. The subset $\{ (x,x) : x \in A_t \}$ is a subset of $A_t \times A_t$, hence negligible. By modifying $\phi_t$ on $A_t$, we can suppose that $\phi_t(x,x) =1$ for any $x \in X$. Indeed, it is clear by observing the definition \eqref{Positive-definite-kernel} that the property \textit{positive definite} is not lost by this change.

The end of the proof is similar to the proof of the first point replacing \cite[Theorem C.2.3]{BHV08} by Lemma \ref{Lemma-2-measurable}.
\end{proof}

\begin{remark} \normalfont
In the opposite direction, consider a \textit{separable} real Hilbert space $H$ and a weak measurable function $\alpha \co X \to H$. Let $(e_n)_{n \geq 1}$ be an orhonormal basis of $H$. We can write 
\begin{equation}
\label{countable-sum}
\la \alpha_x,\alpha_y \ra_H
=\sum_{n \geq 1} \la \alpha_x, e_n \ra_H \la e_n, \alpha_y \ra_H, \quad x,y \in X.
\end{equation} 
So the map $X \times X \to \mathbb{C}$, $(x,y) \mapsto \la \alpha_x,\alpha_y \ra_H$ is measurable with respect to the $\sigma$-algebra $\mathcal{A} \ot \mathcal{A}$. Since we have
$$
\norm{\alpha_x - \alpha_y}_H^2            
=\norm{\alpha_x}_H^2+\norm{\alpha_y}_H^2-2 \la \alpha_x,\alpha_y \ra_H,
$$
we see that the symbol $\phi_t$ defined in \eqref{symbol-semigroup-Schur} is measurable.

Note that $\phi_t$ converges to the function 1 when $t \to 0$ for the weak* topology of $\L^\infty(X \times X)$. Indeed, for any $f \in \L^1(X \times X)$ we have using dominated convergence theorem
\begin{align*}
\MoveEqLeft
\la\phi_t,f \ra_{\L^\infty(X \times X),\L^1(X \times X)}         
=\int_{X \times X} \e^{-t\norm{\alpha_x-\alpha_y}_{H}^2} f(x,y)\d x \d y 
\xra[t \to 0]{} \int_{X \times X}  f(x,y)\d x \d y \\
&=\la 1 ,f \ra_{\L^\infty(X \times X),\L^1(X \times X)}.
\end{align*}
With this convergence, it is left to the reader to show with \cite[Lemma 4.25]{ArK22b} that the symbols \eqref{symbol-semigroup-Schur} define a weak* continuous semigroup $(T_t)_{t \geq 0}$.

If $H$ is not separable, we can obtain the measurability of the symbols \eqref{symbol-semigroup-Schur} if we make the stronger assertion that $\alpha$ is strongly measurable \cite[Definition 1.1.14 p. 8]{HvNVW16}, which is equivalent by Pettis measurability theorem \cite[Theorem 1.1.20 p. 10]{HvNVW16} to say that $\alpha$ is weak measurable and essentially separably valued, i.e. there exists a negligible subset $N$ of $X$ such that $\alpha(X-N)$ is separable. In this case, we can find a separable Hilbert space $H$ containing $\alpha(X-N)$. Consequently, with an orhonormal basis $(e_n)_{n \geq 1}$ of $H$ \eqref{countable-sum} is true on $(X-N) \times (X-N)$. We conclude as previously (if $X$ is complete).
\end{remark}

\section{Dilations of semigroups of measurable Schur multipliers}
\label{sec-dilation-Schur}

Let $X$ be a $\sigma$-finite measure space. If $g \in \L^\infty(X)$, we will use the multiplication operator $\Mult_{g} \co \L^2(X) \to \L^2(X)$, $\xi \mapsto g\xi$. Recall that $(\Mult_{g})^*=\Mult_{\ovl{g}}$. We will use the following elementary result which describes the composition of a Hilbert-Schmidt operator $K_f \co \L^2(X) \to \L^2(X)$ with multiplication operators. Recall that the space of Hilbert-Schmidt operators is an ideal of the algebra $\B(\L^2(X))$ of bounded operators on the Hilbert space $\L^2(X)$.

\begin{lemma}
\label{Useful-lemma}
For any $\xi \in \L^2(X)$, any $g,h \in \L^\infty(X)$ and any $f \in \L^2(X \times X)$ we have for almost all $x \in X$
\begin{equation}
\label{formule-calcul-mult}
\big((\Mult_{g} K_f \Mult_{h})(\xi)\big)(x)
=\int_{X} g(x)h(y)f(x,y)\xi(x) \d y.
\end{equation}
So the Hilbert-Schmidt operator $\Mult_{g} K_f \Mult_{h} \co \L^2(X) \to \L^2(X)$ is associated with the function $(x,y) \mapsto g(x)h(y)f(x,y)\xi(x)$.
\end{lemma}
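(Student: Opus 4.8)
The statement is a direct computation unwinding the definitions, so the plan is simply to apply the three operators in turn, invoking Fubini-type arguments only to guarantee that every object is well defined. There is no deep content: the real work is bookkeeping of the almost-everywhere statements.

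First I would record the boundedness facts. Since $h \in \L^\infty(X)$, the vector $\Mult_h(\xi) = h\xi$ lies in $\L^2(X)$, with $\norm{h\xi}_2 \leq \norm{h}_\infty \norm{\xi}_2$; likewise $\Mult_g$ is bounded because $g$ is essentially bounded. Hence $\Mult_g K_f \Mult_h$ is the composition of a Hilbert-Schmidt operator with two bounded multiplication operators, and since the Hilbert-Schmidt operators form an ideal of $\B(\L^2(X))$ (as recalled just before the statement) this composition is again Hilbert-Schmidt.

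Next I would compute the action on a vector $\xi$. Applying the definition \eqref{Def-de-Kf} of $K_f$ to $h\xi$ gives, for almost every $x \in X$,
$$
\big(K_f(h\xi)\big)(x) = \int_X f(x,y)\, h(y)\xi(y) \d y,
$$
the integral being absolutely convergent for a.e.\ $x$: by Tonelli's theorem applied to $f \in \L^2(X \times X)$, the partial function $f(x,\cdot)$ belongs to $\L^2(X)$ for a.e.\ $x$, and since $h\xi \in \L^2(X)$ the product $f(x,\cdot)\,h\,\xi$ lies in $\L^1(X)$ by Cauchy-Schwarz. Multiplying by $g(x)$, which is constant with respect to the integration variable $y$, then yields
$$
\big((\Mult_g K_f \Mult_h)(\xi)\big)(x) = g(x)\int_X f(x,y) h(y)\xi(y) \d y = \int_X g(x) h(y) f(x,y)\,\xi(y) \d y.
$$

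Finally I would identify the kernel. Setting $\tilde{f}(x,y) \ov{\mathrm{def}}{=} g(x)h(y)f(x,y)$, the pointwise bound $|\tilde{f}| \leq \norm{g}_\infty \norm{h}_\infty |f|$ shows $\tilde{f} \in \L^2(X \times X)$, and the displayed formula says exactly that $\Mult_g K_f \Mult_h = K_{\tilde{f}}$, the associated kernel being $(x,y) \mapsto g(x)h(y)f(x,y)$ independently of $\xi$. This is the assertion of the lemma. The only steps demanding any care are the a.e.\ well-definedness of the inner integral and the membership $\tilde{f} \in \L^2(X \times X)$, and both are immediate from the boundedness of $g,h$ together with Cauchy-Schwarz; I expect no genuine obstacle.
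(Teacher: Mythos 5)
Your proof is correct and follows essentially the same route as the paper's own argument: unwind the composition by applying $\Mult_h$, then $K_f$ via \eqref{Def-de-Kf}, then $\Mult_g$, with your Cauchy--Schwarz/Tonelli remarks merely making the a.e.\ well-definedness explicit. Note that you have (rightly) corrected the typos in the statement as printed --- the integrand should be $\xi(y)$ rather than $\xi(x)$, and the associated kernel is $(x,y)\mapsto g(x)h(y)f(x,y)$ with no factor of $\xi$ --- which is what the author intended and what the later applications of the lemma (e.g.\ the computation \eqref{prem-series-2}) actually use.
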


\begin{proof}
For almost all $x \in X$, we have
\begin{align*}
\MoveEqLeft
\big((\Mult_{g} K_f \Mult_{h})(\xi)\big)(x)           
		=\big((\Mult_{g} K_f)(h\xi)\big)(x) 
		\ov{\eqref{Def-de-Kf}}{=} \bigg(\Mult_{g}\bigg(\int_{X} f(\cdot,y)h(y)\xi(y) \d y\bigg)\bigg)(x)\\
		&=\bigg(g(\cdot)\int_{X} f(\cdot,y)h(y)\xi(y) \d y\bigg)(x)
		=\int_{X} g(x)h(y)f(x,y)\xi(x) \d x.
\end{align*} 
\end{proof}

%

The following is the first main result of this paper. 

\begin{thm}
\label{Th-dilation-continuous}
Let $X$ be a $\sigma$-finite measure space. Let $H$ be a real Hilbert space and $\alpha \co X \to H$ be a strongly measurable map. Then the symbols \eqref{symbol-semigroup-Schur} for $t \geq 0$ defines a weak* continuous semigroup $(T_t)_{t \geq 0}$ of selfadjoint unital completely positive Schur multipliers on $\B(\L^2(X))$. Moreover, there exists an injective von Neumann algebra $M$ equipped with a normal semifinite faithful trace, a weak* continuous group $(U_t)_{t \in \R}$ of trace preserving $*$-automorphisms of $M$, a unital trace preserving injective normal $*$-homomorphism $J \co \B(\L^2(X)) \to M$ such that
\begin{equation}
\label{Dilation-Tt}
T_t
=\E U_t J	
\end{equation}
for any $t \geq 0$, where $\E \co M \to \B(\L^2(X))$ is the canonical trace preserving normal faithful conditional expectation associated with $J$.
\end{thm}

\begin{proof}
Let $\W \co \L^2_\R(\R,H) \to \L^0(\Omega)$ be an $\L^2_\R(\R,H)$-isonormal process on a probability space $(\Omega,\mu)$ as in \eqref{Concrete-W}. We define the von Neumann algebra $M \ov{\mathrm{def}}{=} \L^\infty(\Omega) \otvn \B(\L^2(X))$. Note that $M$ is an injective von Neumann algebra. We equip this von Neumann algebra with the normal semifinite faithful trace $\tau_M \ov{\mathrm{def}}{=} \int_{\Omega} \cdot \ot \tr$. By \cite[Theorem 1.22.13]{Sak71}, we have a $*$-isomorphism $M=\L^\infty(\Omega,\B(\L^2(X)))$. We define the canonical unital normal injective $*$-homomorphism
\begin{equation}
\label{Def-de-J-Schur}
\begin{array}{cccc}
  J \co  &  \B(\L^2(X))   &  \longrightarrow   &   \L^\infty(\Omega) \otvn \B(\L^2(X)) \\
      &       z        &  \longmapsto       &  1 \ot z  \\
\end{array}.	
\end{equation}
It is clear that the map $J$ preserves the traces. We denote by $\E \co \L^\infty(\Omega) \otvn \B(\L^2(X)) \to \B(\L^2(X))$ the canonical trace preserving normal faithful conditional expectation of $M$ onto $\B(\L^2(X))$. Note that $\E=\int_\Omega \cdot \ot \Id_{\B(\L^2(X))}$. For almost all $\omega \in \Omega$ and any $t \geq 0$, we introduce with \eqref{Def-Wt-h} 
\begin{equation}
\label{Def-Kt-omega}
k_{t,\omega}(x)	
\ov{\mathrm{def}}{=} \e^{\sqrt{2}\i (\W_t(\alpha_x))(\omega)}, \quad x \in X.
\end{equation}
Let $(f_i)$ be an orthonormal basis of the separable Hilbert space $\L^2_\R(\R^+)$. By Pettis measurability theorem \cite[Theorem 1.1.20 p. 10]{HvNVW16}, there exists a negligible subset $N$ of $X$ such that $\alpha(X-N)$ is subset of a separable closed subspace $H_0$ of $H$. Let $(e_j)$ be an orthonormal basis of $H_0$. For any $t>0$ fixed, we can write almost everywhere (with \cite[Corollary 6.4.4 pp. 35-36]{HvNVW18})
\begin{equation}
(\W_t(\alpha_x))(\omega)
\ov{\eqref{Concrete-W}\eqref{Def-Wt-h}}{=} \sum_{i,j \in I} \gamma_{i,j}(\omega) \big\langle 1_{[0,t]} \ot \alpha_x,f_i \ot e_j \big\rangle_{\L^2_\R(\R^+,H)}
=\sum_{i,j \in I} \gamma_{i,j}(\omega) \big\langle 1_{[0,t]},f_i \big\rangle_{} \langle \alpha_x, e_j \rangle_H.
\end{equation}
We conclude that the function $X \times \Omega \to \mathbb{C}$, $(x,\omega) \mapsto k_{t,\omega}(x)$ is measurable.

If $t<0$, we define $k_{t,\omega}(x)$ by the same formula but with $\W_t(\alpha_x) \ov{\mathrm{def}}{=} \W(-1_{[t,0]} \ot \alpha_x)$. For any $t \in \R$, we can define an element $V_t$ of $\L^\infty(\Omega,\B(\L^2(X)))$ by 
\begin{equation}
\label{Def-Vt}
V_t(\omega)
\ov{\mathrm{def}}{=} \Mult_{k_{t,\omega}}.	
\end{equation}
Indeed, the function $V_t$ is weak* measurable since if we consider a normal linear form $f \co \B(\L^2(X)) \to \mathbb{C}$, $z \mapsto \sum_{n \geq 1} \la z(\xi_n), \eta_n \ra_{\L^2(X)}$ with $\sum_{n \geq 1} (\norm{\xi_n}_{\L^2(X)}+\norm{\eta_n}_{\L^2(X)}) < \infty$ \cite[Proposition 1.2.2 p. 9]{Li92}, we see that
$$
f\big(\Mult_{k_{t,\omega}}\big)
=\sum_{n \geq 1} \big\la \Mult_{k_{t,\omega}}\xi_n, \eta_n \big\ra_{\L^2(X)}
=\sum_{n \geq 1} \int_X k_{t,\omega} \xi_n \eta_n 
$$
is measurable since each integral defines a measurable function $\omega \to \int_X k_{t,\omega} \xi_n \eta_n$ by Fubini's theorem. Note that $V_t$ is an unitary element of $\L^\infty(\Omega,\B(\L^2(X)))$. Now, for any $t \in \R$ we define the linear map
\begin{equation}
\label{Def-Ut-Schur}
\begin{array}{cccc}
 \mathcal{U}_t \co   &  \L^\infty\big(\Omega,\B(\L^2(X))\big)   &  \longrightarrow   &  \L^\infty\big(\Omega,\B(\L^2(X))\big)  \\
        &  g   &  \longmapsto       &  V_t gV_t^* \\
\end{array}.	
\end{equation}
It is obvious that each map $\mathcal{U}_t$ is a trace preserving $*$-automorphism of the von Neumann algebra $M$. Moreover, for any elements $\xi,\eta$ of the Hilbert space $\L^2(\Omega,\L^2(X))=\L^2(\Omega \times X)$, we obtain using\footnote{\thefootnote. We have $1_{[0,t]} \xra[t \to t_0]{} 1_{[0,t_0]}$ and $1_{[t,0]} \xra[t \to t_0]{} 1_{[t_0,0]}$ in $\L^2_\R(\R)$.} the first part of Lemma \ref{Lemma-semigroup-continuous} since $\ovl{\xi} \eta \in \L^1(\Omega \times X)$
\begin{align*}
\MoveEqLeft
\big\langle V_t(\xi), \eta \big\rangle_{\L^2(\Omega,\L^2(X))}           
		=\int_{\Omega \times X} \ovl{V_t(\xi)(\omega,x)} \eta(\omega,x) \d\mu(\omega) \d x \\
		&\ov{\eqref{Def-Vt}}{=} \int_{\Omega \times X} \ovl{k_{t,\omega}(x)\xi(\omega,x)} \eta(\omega,x) \d\mu(\omega) \d x 
		\ov{\eqref{Def-Kt-omega}}{=} \int_{\Omega \times X} \e^{-\sqrt{2}\i (\W_t(\alpha_x))(\omega)} \ovl{\xi(\omega,x)} \eta(\omega,x) \d\mu(\omega) \d x
		\\
		&\xra[t \to 0]{} \int_{\Omega \times X} \e^{-\sqrt{2}\i (\W_{t_0}(\alpha_x))(\omega)}\ovl{\xi(\omega,x)} \eta(\omega,x)\d\mu(\omega)\d x
		=\langle V_{t_0}(\xi), \eta \rangle_{\L^2(\Omega,\L^2(X))}.	
\end{align*} 
So $(V_t)_{t \in \R}$ is a weak continuous family of unitaries hence a strongly continuous continuous family since by \cite[p. 41]{Str81} the weak operator topology and the strong operator topology coincide on the unitary group. By composition on bounded subsets, the map $\R \to \L^\infty(\Omega,\B(\L^2(X)))$, $t \mapsto \mathcal{U}_t(g)$ is strong operator continuous for any $g \in \L^\infty(\Omega,\B(\L^2(X)))$ and clearly even strong* operator continuous, hence weak* continuous. We conclude that $(\mathcal{U}_t)_{t \in \R}$ is a point-weak* continuous family of $*$-automorphisms. For any $f \in \L^2(X \times X)$ and any $t \in \R$, we have 
\begin{align}
\MoveEqLeft
\label{prem-series-2}
\mathcal{U}_t(1 \ot K_f)   
\ov{\eqref{Def-Ut-Schur}}{=} \omega \mapsto (V_t (1 \ot K_f) V_t^*)(\omega)
=\omega \mapsto V_t(\omega) K_f V_t(\omega)^* \\
&\ov{\eqref{Def-Vt}}{=} \omega \mapsto \Mult_{k_{t,\omega}} K_f (\Mult_{k_{t,\omega}})^* 
\nonumber 
\ov{\eqref{formule-calcul-mult}}{=} \omega \mapsto K\big((x,y) \mapsto k_{t,\omega}(x)\ovl{k_{t,\omega}(y)}f(x,y) \big) \nonumber \\
&\ov{\eqref{Def-Kt-omega}}{=} \omega \mapsto K\big((x,y) \mapsto \e^{\sqrt{2}\i (\W_t(\alpha_x-\alpha_y))(\omega)} f(x,y) \big).
\nonumber
\end{align} 
In particular, for almost all $\omega \in \Omega$, any $\xi \in \L^2(X)$ and almost all $y \in X$, we have
\begin{align}
\MoveEqLeft
\label{prem-series}
\big(\big[(\mathcal{U}_t(1 \ot K_f))(\omega)\big](\xi)\big)(y)         
\ov{\eqref{Def-de-Kf}}{=} \int_X \e^{\sqrt{2}\i (\W_t(\alpha_x-\alpha_y))(\omega)}	f(x,y) \xi(x) \d x. 
\end{align} 
For any $t \in \R$, we introduce the right shift $\mathcal{S}_t \co \L^2_\R(\R) \to \L^2_\R(\R)$. We consider the operator $S_t \ov{\mathrm{def}}{=} \Gamma_\infty(\mathcal{S}_t)  \ot \Id_{\B(\L^2(X))} \co \L^\infty(\Omega,\B(\L^2(X))) \to \L^\infty(\Omega,\B(\L^2(X)))$. Essentially, by the second part of Lemma \ref{Lemma-semigroup-continuous}, $(S_t)_{t \in \R}$ is a weak* continuous semigroup of $*$-automorphisms. We have
\begin{equation}
\label{Def-shift-2}
S_t\big(\e^{\i \W(1_{[s,s'[} \ot h)} \ot z\big)
\ov{\eqref{SQ1}}{=} \e^{\i \W(1_{[s+t,s'+t[} \ot h)} \ot z, \quad t \in \R, h \in H, z \in \B(\L^2(X)), s<s'.
\end{equation}
For any $t \in \R$, we define the $*$-automorphism $U_t \ov{\mathrm{def}}{=} \mathcal{U}_t S_t$. We will prove that $(U_t)_{t \in \R}$ is a group of operators. On the one hand, for any $t,t' \in \R$, we have
\begin{align*}
\MoveEqLeft
U_{t'}U_t\big(\e^{\i\W(1_{[s,s']} \ot h)} \ot 1\big)            
=\mathcal{U}_{t'} S_{t'} \mathcal{U}_t S_t\big(\e^{\i\W(1_{[s,s']} \ot h)} \ot 1\big) 
\ov{\eqref{Def-shift-2}}{=} \mathcal{U}_{t'} S_{t'} \mathcal{U}_t\big(\e^{\i\W(1_{[s+t,s'+t]} \ot h)} \ot 1\big) \\
&=\mathcal{U}_{t'} S_{t'} \big(\e^{\i\W(1_{[s+t,s'+t]} \ot h)} \ot 1\big) 
\ov{\eqref{Def-shift-2}}{=} \mathcal{U}_{t'}\big(\e^{\i\W(1_{[s+t+t',s'+t+t']} \ot h)} \ot 1\big) \\
&=\e^{\i\W(1_{[s+t+t',s'+t+t']} \ot h)} \ot 1 
=U_{t+t'}\big(\e^{\i\W(1_{[s,s']} \ot h)} \ot 1\big).
\end{align*}  
On the other hand, for any $t,t' \geq 0$ and any $\xi \in \L^2(X)$ we have
\begin{align*}
\MoveEqLeft
U_{t'}U_t(1 \ot K_f)         
=\mathcal{U}_{t'} S_{t'}\mathcal{U}_tS_t(1 \ot K_f) 
\ov{\eqref{Def-shift-2}}{=} \mathcal{U}_{t'} S_{t'}\mathcal{U}_t (1 \ot K_f) \\
&\ov{\eqref{prem-series-2}}{=} \mathcal{U}_{t'} S_{t'} \big(\omega \mapsto K\big((x,y) \mapsto \e^{\sqrt{2}\i (\W_t(\alpha_x-\alpha_y))(\omega)}	f(x,y)\big)\big)\\
&=\mathcal{U}_{t'} \big(\omega \mapsto K\big((x,y) \mapsto \e^{\sqrt{2}\i (\W(1_{[t',t+t']} \ot (\alpha_x-\alpha_y)))(\omega)}	f(x,y)\big)\big)\\
&\ov{\eqref{Def-Ut-Schur} \eqref{Def-Vt}}{=} \omega \mapsto \Mult_{k_{t',\omega}} K\big((x,y) \mapsto \e^{\sqrt{2}\i (\W(1_{[t',t+t']} \ot (\alpha_x-\alpha_y)))(\omega)}	f(x,y)\big) (\Mult_{k_{t',\omega}})^*\\
&\ov{\eqref{formule-calcul-mult}}{=} \omega \mapsto K\big((x,y) \mapsto k_{t',\omega}(x)\ovl{k_{t',\omega}}(y)\e^{\sqrt{2}\i (\W(1_{[t',t+t']} \ot (\alpha_x-\alpha_y)))(\omega)}	f(x,y) \big) \\
&\ov{\eqref{Def-Kt-omega}}{=} \omega \mapsto  K\big((x,y) \mapsto\e^{\sqrt{2}\i (\W_{t'}(\alpha_x-\alpha_y))(\omega)}\e^{\sqrt{2}\i (\W(1_{[t',t+t']} \ot (\alpha_x-\alpha_y)))(\omega)}	f(x,y) \big)\\
&= \omega \mapsto K\big((x,y) \mapsto \e^{\sqrt{2}\i (\W_{t+t'}(\alpha_x-\alpha_y))(\omega)} f(x,y) \big)
\ov{\eqref{prem-series}}{=} \mathcal{U}_{t+t'}(1 \ot K_f) 
=U_{t+t'}(1 \ot K_f).
\end{align*}  
Similarly for any $t \leq 0$ and any $t' \geq 0$ such that $t' \leq -t$ and any $\xi \in \L^2(X)$ we have
\begin{align*}
\MoveEqLeft
U_{t'}U_t(1 \ot K_f)         
=\mathcal{U}_{t'} S_{t'}\mathcal{U}_tS_t(1 \ot K_f) 
\ov{\eqref{Def-shift-2}}{=} \mathcal{U}_{t'} S_{t'}\mathcal{U}_t (1 \ot K_f) \\
&\ov{\eqref{prem-series-2}}{=} \mathcal{U}_{t'} S_{t'} \big(\omega \mapsto K\big((x,y) \mapsto \e^{\sqrt{2}\i (\W(-1_{[t,0]} \ot (\alpha_x-\alpha_y)))(\omega)}	f(x,y)\big)\big)\\
&=\mathcal{U}_{t'} \big(\omega \mapsto K\big((x,y) \mapsto \e^{\sqrt{2}\i (\W(-1_{[t'+t,t']} \ot (\alpha_x-\alpha_y)))(\omega)}	f(x,y)\big)\big)\\
&\ov{\eqref{Def-Ut-Schur} \eqref{Def-Vt}}{=} \omega \mapsto \Mult_{k_{t',\omega}} K\big((x,y) \mapsto \e^{\sqrt{2}\i (\W(-1_{[t'+t,t']} \ot (\alpha_x-\alpha_y)))(\omega)}	f(x,y)\big) (\Mult_{k_{t',\omega}})^*\\
&\ov{\eqref{formule-calcul-mult}}{=} \omega \mapsto K\big((x,y) \mapsto k_{t',\omega}(x)\ovl{k_{t',\omega}}(y)\e^{\sqrt{2}\i (\W(-1_{[t'+t,t']} \ot (\alpha_x-\alpha_y)))(\omega)}	f(x,y) \big) \\
&\ov{\eqref{Def-Kt-omega}}{=} \omega \mapsto  K\big((x,y) \mapsto\e^{\sqrt{2}\i (\W(1_{[0,t']} \ot (\alpha_x-\alpha_y)(\omega)}\e^{\sqrt{2}\i (\W(-1_{[t'+t,t']} \ot (\alpha_x-\alpha_y)))(\omega)}	f(x,y) \big)\\
&=\omega \mapsto K\big((x,y) \mapsto \e^{\sqrt{2}\i (\W(-1_{[t'+t,0]} \ot (\alpha_x-\alpha_y)(\omega)} f(x,y) \big) \quad \text{since } t+t' \leq 0 \leq t' \\
&\ov{\eqref{prem-series}}{=} \mathcal{U}_{t+t'}(1 \ot K_f) 
=U_{t+t'}(1 \ot K_f).
\end{align*} 
The remaining cases are left to the reader. With these computations, it is easy to check that $(U_t)_{t \in \R}$ is a weak* continuous group of $*$-automorphisms (consider the preadjoints maps of the $U_t$'s and use \cite[Lemme B.15]{EnN00} to prove the weak* continuity of the group). 

For any $f \in \L^2(X \times X)$, any $\xi \in \L^2(X)$, all $t \geq 0$ and almost all $y$ of $X$, we finally have\footnote{\thefootnote. Note that the function $f(\cdot,y)$ belongs to $\L^2(X)$ for almost all $y \in X$.}
\begin{align*}
\MoveEqLeft
 \big(\big[\E U_tJ(K_f)\big](\xi)\big)(y)           
		\ov{\eqref{Def-de-J-Schur}}{=} \big(\big[\E U_t(1 \ot K_f)\big](\xi)\big)(y)
		=\big(\big[\E \mathcal{U}_t S_t(1 \ot K_f)\big](\xi)\big)(y) \\
		&= \big(\big[\E \mathcal{U}_t(1 \ot K_f)\big](\xi)\big)(y) 
		=\bigg(\bigg[\int_{\Omega}  \big(\mathcal{U}_t(1 \ot K_f)\big)(\omega) \d \mu(\omega)\bigg](\xi)\bigg)(y) \\
		&=\int_{\Omega} \big(\big[ (\mathcal{U}_t(1 \ot K_f))(\omega)\big](\xi)\big)(y) \d \mu(\omega)\\
		&\ov{\eqref{prem-series}}{=} \int_{\Omega}\bigg(\int_{X} \e^{\sqrt{2}\i (\W_t(\alpha_x-\alpha_y))(\omega)} f(x,y)\xi(x) \d x\bigg) \d \mu(\omega)\\
		&=\int_{X}  \bigg(\int_{\Omega} \e^{\sqrt{2}\i (\W_t(\alpha_x-\alpha_y))(\omega)} \d \mu(\omega) \bigg) f(x,y)\xi(x) \d x
		\ov{\eqref{Esperance-exponentielle-complexe-2}}{=} \int_{X}  \e^{-t\norm{\alpha_x-\alpha_y}_{H}^2} f(x,y)\xi(x) \d x \\
		&\ov{\eqref{symbol-semigroup-Schur}}{=} \int_{X} \phi_t(x,y)f(x,y)\xi(x) \d x
		\ov{\eqref{Def-de-Kf}}{=}\big(K_{\phi_t f}(\xi)\big)(y).
\end{align*} 
Hence we have $\big(\E U_tJ(K_f)\big)(\xi)=\big(K_{\phi_t f}(\xi)$ and finally $\E U_tJ(K_f)=K_{\phi_t f}$. By weak* density, we deduce the existence of $T_t$ and \eqref{Dilation-Tt}. For any $t,t' \geq 0$ and all $x,y \in X$, we have 
$$
\e^{-t\norm{\alpha_x-\alpha_y}_{H}^2}\e^{-t'\norm{\alpha_x-\alpha_y}_{H}^2}
=\e^{-(t+t')\norm{\alpha_x-\alpha_y}_{H}^2}.
$$
We deduce that $(T_t)_{t \geq 0}$ is a semigroup. Furthermore, the function \eqref{symbol-semigroup-Schur} is real-valued. By Proposition \ref{Prop-selfadjoint}, we conclude that each $T_t$ is selfadjoint. The factorization \eqref{Dilation-Tt} shows that each $T_t$ is completely positive and unital. The weak* continuity is a consequence of the one of the group $(U_t)_{t \in \R}$.
\end{proof}


%

\begin{remark} \normalfont
It would be interesting to give a direct proof of the first part, i.e. without the factorization \eqref{Dilation-Tt}. 
\end{remark}

\section{Markov dilations of semigroups of measurable Schur multipliers}
\label{sec-Markov}



Our second main result is the following theorem which gives a standard Markov dilation.

\begin{thm}
\label{Th-Markov-dilation-Schur}
Let $X$ be a $\sigma$-finite measure space. Let $H$ be a real Hilbert space and $\alpha \co X \to H$ be a strongly measurable map. Consider the weak* continuous semigroup $(T_t)_{t \geq 0}$ of selfadjoint unital completely positive Schur multipliers on $\B(\L^2(X))$ defined in Theorem \ref{Th-dilation-continuous} by the symbols \eqref{symbol-semigroup-Schur}. There exists an injective von Neumann algebra $N$ equipped with a normal semifinite faithful trace, an increasing filtration $(N_s)_{s \geq 0}$ of $N$ with associated trace preserving conditional normal faithful expectations $\E_s \co N \to N_s$ and trace preserving unital normal injective $*$-homomorphisms $\pi_s \co \B(\L^2(X)) \to N_s$ such that
\begin{equation}
\label{Equa-standard-Markov}
\E_s\pi_t
=\pi_sT_{t-s}, \quad 0 \leq s \leq t.
\end{equation}
\end{thm}

\begin{proof}
Let $\W \co \L^2_\R(\R^+,H) \to \L^0(\Omega)$ be an $H$-cylindrical Brownian motion on a probability space $(\Omega,\mu)$, see Section \ref{sec:preliminaries}. As in the proof of Theorem \ref{Th-dilation-continuous}, for almost all $\omega \in \Omega$ and any $t \geq 0$, we introduce the element \eqref{Def-Kt-omega} and for any $t \geq 0$ the element $V_t$ of $\L^\infty(\Omega,\B(\L^2(X)))$ defined by \eqref{Def-Vt}. Recall that $V_t$ is an unitary element of the von Neumann algebra $N \ov{\mathrm{def}}{=} \L^\infty(\Omega,\B(\L^2(X)))$ which is clearly injective. Again with the $*$-homomorphism $J$ defined in \eqref{Def-de-J-Schur} and the $*$-automorphism $\mathcal{U}_t$ defined in \eqref{Def-Ut-Schur}, we define for any $t \geq 0$ the  unital normal injective $*$-homomorphism
\begin{equation}
\label{def-Pi-s-Schur-2}
\begin{array}{cccc}
\pi_t\ov{\mathrm{def}}{=} \mathcal{U}_tJ \co   &  \B(\L^2(X))   &  \longrightarrow   &  \L^\infty\big(\Omega,\B(\L^2(X))\big)  \\
        &  z   &  \longmapsto       &  V_t (1 \ot z)V_t^* \\
\end{array}.	
\end{equation}
It is obvious that $\pi_t$ is trace preserving. For any $t \geq 0$, we also define the canonical normal conditional expectations $\E_{\scr{F}_t} \co \L^\infty(\Omega) \to \L^\infty(\Omega,\scr{F}_t)$ and $\E_t \ov{\mathrm{def}}{=}\E_{\scr{F}_t} \ot \Id_{\B(\L^2(X))} \co \L^\infty(\Omega,\B(\L^2(X)) \to \L^\infty(\Omega,\scr{F}_t,\B(\L^2(X))$ where the $\sigma$-algebra $\scr{F}_t$ is defined in \eqref{Filtration-H-cylindrical}. We introduce the von Neumann algebra $N_t \ov{\mathrm{def}}{=} \L^\infty(\Omega,\scr{F}_t,\B(\L^2(X))$. Note that the range of $\pi_t$ is included in $N_t$.  

For almost all $\omega \in \Omega$, the Hilbert-Schmidt operator 
$$
\big(\pi_t(K_f)\big)(\omega)
\ov{\eqref{def-Pi-s-Schur-2}\eqref{Def-de-J-Schur}}{=} \big(\mathcal{U}_t(1 \ot K_f)\big)(\omega)
$$ 
is associated by the computation \eqref{prem-series-2} with the function 
\begin{equation}
\label{Equa-inter-5}
(x,y) \mapsto \e^{\sqrt{2}\i(\W_t(\alpha_x-\alpha_y))(\omega)} f(x,y).
\end{equation}
Similarly, for any $0 \leq s \leq t$ and almost all $\omega \in \Omega$, the operator
\begin{align*}
\MoveEqLeft
\big(\pi_s(T_{t-s}(K_f))\big)(\omega)           
=\big(\pi_s(K_{\phi_{t-s}f})	\big)(\omega) 
\end{align*} 
is Hilbert-Schmidt and associated with the function 
\begin{equation}
\label{Divers-32}
(x,y) \mapsto \e^{\sqrt{2}\i(\W_s(\alpha_x-\alpha_y))(\omega)} \e^{-(t-s)\norm{\alpha_x-\alpha_y}_{H}^2}f(x,y).
\end{equation}
Now recall that $(\W_t(h))_{t \geq 0}$ is a Brownian motion for any fixed $h \in H$. Hence for any $0 \leq s \leq t$ and any $x,y \in X$ the random variable 
\begin{align}
\MoveEqLeft
\label{Eq-inter-1}
\W\big(1_{]s,t]} \ot (\alpha_x-\alpha_y)\big)
=\W\big(1_{[0,t]} \ot (\alpha_x-\alpha_y)\big)-\W\big(1_{[0,s]} \ot (\alpha_x-\alpha_y)\big) \\
&\ov{\eqref{Def-Wt-h}}{=} \W_t(\alpha_x-\alpha_y)-\W_s(\alpha_x-\alpha_y) \nonumber
\end{align}
is independent by \eqref{difference-3} from the $\sigma$-algebra $\scr{F}_s \ov{\eqref{Filtration-H-cylindrical}}{=} \sigma\big(\W_r(h) : r \in [0,s], h \in H\big)$. Consequently, the random variable $\e^{\sqrt{2}\i\W(1_{]s,t]}\ot (\alpha_x-\alpha_y))}$ is also independent from the $\sigma$-algebra $\scr{F}_s$. Using \cite[Proposition 2.6.31]{HvNVW16} in the fourth equality since each random variable $\W_s(\alpha_x)$ is $\scr{F}_s$-measurable, we finally obtain for any $0 \leq s \leq t$ and any $f \in \L^2(X \times X)$
\begin{align*}
\MoveEqLeft
\E_s\pi_t(K_f)             
\ov{\eqref{Equa-inter-5}}{=} \E_s\Big(\omega \mapsto \big[K\big((x,y) \mapsto \e^{\sqrt{2}\i \W_t(\alpha_x-\alpha_y)(\omega)} f(x,y)\big)\big]\Big) \\
&\ov{\eqref{Eq-inter-1}}{=} \E_s\Big(\omega \mapsto \big[K\big((x,y) \mapsto\e^{\sqrt{2}\i\W_s(\alpha_x-\alpha_y)(\omega)}\e^{\sqrt{2}\i\W(1_{]s,t]}\ot (\alpha_x-\alpha_y))(\omega)}f(x,y)\big)\big]\Big)\\
&\ov{\eqref{formule-calcul-mult}}{=} \E_s\Big(\omega \mapsto \Mult_{k_{s,\omega}}\big[K\big((x,y) \mapsto \e^{\sqrt{2}\i\W(1_{]s,t]}\ot (\alpha_x-\alpha_y))(\omega)}f(x,y)\big)\big]\Mult_{\ovl{k_{s,\omega}}}\Big)\\
&=\big[\omega \mapsto \Mult_{k_{s,\omega}}\big] \E_s\big(\omega \mapsto \big[K\big((x,y) \mapsto \e^{\sqrt{2}\i\W(1_{]s,t]}\ot (\alpha_x-\alpha_y))(\omega)}f(x,y)\big)\big) \big[\omega \mapsto \Mult_{\ovl{k_{s,\omega}}} \big]\\
&\ov{\eqref{Hy-2.6.35}}{=}\big[\omega \mapsto \Mult_{k_{s,\omega}}\big] \E\big(\omega \mapsto \big[ K\big((x,y) \mapsto \e^{\sqrt{2}\i\W(1_{]s,t]}\ot (\alpha_x-\alpha_y))(\omega)}f(x,y)\big)\big]\big) \big[\omega \mapsto \Mult_{\ovl{k_{s,\omega}}} \big].
\end{align*} 
Now, for any $\xi \in \L^2(X)$ and almost all $y \in Y$, we see that
\begin{align*}
\MoveEqLeft
\E\big(\omega \mapsto \big[K\big((x,y) \mapsto \e^{\sqrt{2}\i\W(1_{]s,t]}\ot (\alpha_x-\alpha_y))(\omega)}f(x,y)\big)\big]\big) (\xi)(y)   \\   
&=\bigg(\int_\Omega K\big((x,y) \mapsto \e^{\sqrt{2}\i\W(1_{]s,t]}\ot (\alpha_x-\alpha_y))(\omega)}f(x,y)\big)\big) \d \mu(\omega)\bigg) (\xi)(y) \\     
&=\int_X \bigg(\int_\Omega \e^{\sqrt{2}\i\W(1_{]s,t]}\ot (\alpha_x-\alpha_y))(\omega)}\d \mu(\omega)\bigg)f(x,y)\xi(x) \d x \\
&\ov{\eqref{Esperance-exponentielle-complexe-2}}{=} \int_X \e^{-(t-s)\norm{\alpha_x-\alpha_y}_{H}^2}f(x,y)\xi(x) \d x.
\end{align*}
Consequently, the previous expectation is
\begin{align*}
\MoveEqLeft
\E\big(\omega \mapsto \big[K\big((x,y) \mapsto \e^{\sqrt{2}\i\W(1_{]s,t]}\ot (\alpha_x-\alpha_y))(\omega)}f(x,y)\big)\big]\big) 
=K\big((x,y) \mapsto \e^{-(t-s)\norm{\alpha_x-\alpha_y}_{H}^2}f(x,y)\big).
\end{align*}
Finally, we obtain
\begin{align*}
\MoveEqLeft
\E_s\pi_t(K_f)             
=\big[\omega \mapsto \Mult_{k_{s,\omega}}\big] K\big((x,y) \mapsto \e^{-(t-s)\norm{\alpha_x-\alpha_y}_{H}^2}f(x,y)\big) \big[\omega \mapsto \Mult_{\ovl{k_{s,\omega}}} \big]\\
&\ov{\eqref{formule-calcul-mult}}{=}  \omega \mapsto K\big((x,y) \mapsto \e^{\sqrt{2}\i\W_s(\alpha_x-\alpha_y)(\omega)}\e^{-(t-s)\norm{\alpha_x-\alpha_y}_{H}^2}f(x,y)\big)
\ov{\eqref{Divers-32}}{=} \pi_s(T_{t-s}(K_f)).
\end{align*}
By weak* density, the proof is complete.
\end{proof}


Similarly, we can prove the following reversed Markov dilation.

\begin{thm}
\label{Th-reversed-Markov-dilation-Schur}
Let $X$ be a $\sigma$-finite measure space. Let $H$ be a real Hilbert space and $\alpha \co X \to H$ be a strongly measurable map. Consider the weak* continuous semigroup $(T_t)_{t \geq 0}$ of selfadjoint unital completely positive Schur multipliers on $\B(\L^2(X))$ defined in Theorem \ref{Th-dilation-continuous} by the symbols \eqref{symbol-semigroup-Schur}. There exists an injective von Neumann algebra $N$ equipped with a normal semifinite faithful trace, a decreasing filtration $(N_s)_{s \geq 0}$ of $N$ with associated trace preserving normal faithful conditional expectations $\E_s \co N \to N_s$ and trace preserving unital normal injective $*$-homomorphisms $\pi_s \co \B(\L^2(X)) \to N_s$ such that
\begin{equation}
\label{-equa-reversed-Markov}
\E_s\pi_t
=\pi_sT_{s-t}, \quad 0 \leq t \leq s.
\end{equation}
\end{thm}

\section{Applications to functional calculus and interpolation}
\label{sec-Applications}

We refer to \cite{Haa06}, \cite{HvNVW18}, \cite{JMX06}, \cite{KuW04}, \cite{LeM07} for background on sectoriality and $\H^\infty$ functional calculus. In the spirit of the result of \cite[Theorem 4.1]{Arh20}, we can prove the following theorem with a similar agument.

\begin{thm}
\label{Th-funct-calculus}
Let $X$ be a $\sigma$-finite measure space. Let $H$ be a real Hilbert space and $\alpha \co X \to H$ be a strongly measurable map. Consider the weak* continuous semigroup $(T_t)_{t \geq 0}$ of selfadjoint unital completely positive Schur multipliers on $\B(\L^2(X))$ defined in Theorem \ref{Th-dilation-continuous} by the symbols \eqref{symbol-semigroup-Schur}. Suppose $1<p<\infty$. We let $-A_p$ be the generator of the induced strongly continuous semigroup $(T_{t,p})_{t \geq 0}$ on the Banach space $S^p_X$. Then for any $\theta>\pi|\frac{1}{p}-\frac{1}{2}|$, the operator $A_p$ has a completely bounded $\H^{\infty}(\Sigma_\theta)$ functional calculus.
\end{thm}

Let $\mathcal{T}=(T_t)_{t \geq 0}$ be a Markov semigroup on a von Neumann algebra $\mathcal{M}$ equipped with a normal semifinite faithful trace. We set $
\mathcal{M}_0 
\ov{\mathrm{def}}{=} \left\{  x \in \mathcal{M}  : \lim_{t \rightarrow \infty} T_t(x) = 0 \right\}$ where the limit is a weak* limit. For $1 \leq p < \infty$, we let $
\L^p_0(\mathcal{M})
\ov{\mathrm{def}}{=} \{x \in \L^p(\mathcal{M}) : \lim_{t \to \infty} T_t(x)=0 \}$ where the limit is for the $\L^p$-norm.

Recall that the column-BMO seminorm is defined by
$$
\norm{x}_{\BMO^c_{\mathcal{T}}} 
\ov{\mathrm{def}}{=} \sup_{t \geq 0}  \bnorm{  T_t\left(|x - T_t(x)|^2 \right) }_\infty^{\frac{1}{2}}, \quad x \in \L^2_0(\mathcal{M}).
$$
Then we can define the row-BMO seminorm $\norm{x}_{\BMO^r_{\mathcal{T}}} \ov{\mathrm{def}}{=} \norm{x^*}_{\BMO^c_{\mathcal{T}}}$ and finally the BMO-seminorm
$$	
\norm{x}_{\BMO_{\mathcal{T}}} 
\ov{\mathrm{def}}{=} \max\big\{ \norm{x}_{\BMO^c_{\mathcal{T}}}, \norm{x}_{\BMO^r_{\mathcal{T}}} \big\}.
$$  
In this context, we can introduce a Banach space $\BMO_{\mathcal{T}}$ and we refer to \cite[Section 3]{Mei08}, \cite{JM12} and \cite{Cas19} for a precise definition. Combinating Theorem \ref{Th-reversed-Markov-dilation-Schur} and \cite[Theorem 5.12]{JM12}, we obtain the following interpolation formula by oberving that each operator $T_t$ of the semigroup is trace-preserving by the factorization of Theorem \ref{Th-dilation-continuous}.

\begin{thm}
\label{Th-interpolation-BMO}
Let $X$ be a $\sigma$-finite measure space. Let $H$ be a real Hilbert space and $\alpha \co X \to H$ be a strongly measurable map. Consider the weak* continuous semigroup $(T_t)_{t \geq 0}$ of selfadjoint unital completely positive Schur multipliers on $\B(\L^2(X))$ defined in Theorem \ref{Th-dilation-continuous} by the symbols \eqref{symbol-semigroup-Schur}. Suppose $1\leq p<\infty$. Then 
$$
\big(\BMO_\mathcal{T},S^1_{X,0}\big)_{\frac{1}{p}}
\approx S^p_{X,0}
$$ 
with equivalence of norms up to a constant $\approx p$.
\end{thm}

\begin{remark} \normalfont
\label{Remarque-ultime}
We can replace the space $\BMO_{\mathcal{T}}$ by other $\BMO$-spaces, see \cite[Theorem 5.12]{JM12}.  
\end{remark}

Finally, we could give applications in ergodic theory in the spirit of the paper \cite{JiW17}.

\vspace{0.2cm}

\textbf{Acknowledgements}.
The author acknowledges support by the grant ANR-18-CE40-0021 (project HASCON) of the French National Research Agency ANR. I would like to thank Julio Delgado for an interesting  discussion. I am grateful to Lyudmyla Turowska for short discussions and the reference \cite{STT14} and Hermann K\"onig for a very instructive discussion on \cite[Theorem 3.a.1 p. 145]{Kon86}. 

\small

\vspace{0.2cm}
\footnotesize{
\noindent C\'edric Arhancet\\ 
\noindent 6 rue Didier Daurat, 81000 Albi, France\\
URL: \href{http://sites.google.com/site/cedricarhancet}{https://sites.google.com/site/cedricarhancet}\\
cedric.arhancet@protonmail.com\\

\end{document}

\normalsize

\section{Discussion}

The following lemma is well-known.

\begin{lemma}
\label{Lemma-well-known}
Let $X$ be a locally compact space equipped with a Radon measure with full support. If $f,g \co X \times X \to \mathbb{C}$ are continuous with compact support then the function $h$ defined by \eqref{composition-Hilbert-Schmidt} is continuous with compact support.
\end{lemma}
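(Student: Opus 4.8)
The plan is to prove the two assertions—continuity and compact support—separately, handling the support statement first since it is the more elementary of the two.

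For the compact support, suppose $\Supp f \subseteq K_1 \times K_2$ and $\Supp g \subseteq K_3 \times K_4$ for compact sets $K_1,K_2,K_3,K_4 \subseteq X$. Looking at the defining formula $h(x,y) = \int_X f(x,z) g(z,y) \d z$, the integrand vanishes unless $x \in K_1$, $z \in K_2 \cap K_3$, and $y \in K_4$. Hence $h(x,y) = 0$ whenever $(x,y) \notin K_1 \times K_4$, so $\Supp h \subseteq K_1 \times K_4$, which is compact. This gives the support claim directly, and the hypothesis that the measure is Radon ensures the integral over the compact set $K_2 \cap K_3$ is finite for each fixed $(x,y)$, so $h$ is well defined pointwise.

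For the continuity, the natural approach is a uniform-continuity argument. Since $f$ is continuous with compact support on $X \times X$, it is uniformly continuous, and likewise for $g$. Given $(x,y)$ and a nearby point $(x',y')$, I would write the difference
\begin{equation}
h(x',y') - h(x,y) = \int_X \big(f(x',z) - f(x,z)\big) g(z,y') \d z + \int_X f(x,z)\big(g(z,y') - g(z,y)\big) \d z,
\end{equation}
and estimate each term. For the first term, uniform continuity of $f$ lets me make $|f(x',z) - f(x,z)|$ uniformly small in $z$ once $x'$ is close to $x$, while $g(\cdot,y')$ is supported in the fixed compact set $K_3 \times K_4$ and bounded; the integral is then over the compact set $K_3$, whose measure is finite because the measure is Radon. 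The second term is handled symmetrically using uniform continuity of $g$ and the boundedness and compact support of $f$. Combining the two estimates shows $h(x',y') \to h(x,y)$, i.e. $h$ is continuous.

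The main obstacle—if any—is purely bookkeeping: one must ensure all the compact sets involved have finite measure (guaranteed by the Radon hypothesis) and keep track of the relevant $\sup$-norm bounds $\norm{f}_\infty$ and $\norm{g}_\infty$ (finite by continuity plus compact support) so that the product estimates are genuinely uniform. There is no deep difficulty; the ``full support'' and local-compactness hypotheses are mild regularity assumptions that simply make the pointwise integrals and the uniform-continuity argument legitimate. I expect the entire proof to be short, with the continuity estimate being the only part requiring a careful $\epsi$-argument.
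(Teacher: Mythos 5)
The paper offers no proof of this lemma at all---it is simply declared ``well-known''---so your proposal stands on its own, and it is essentially correct; what follows are the only two points worth flagging. The compact-support half is complete as written: taking $K_1,K_2$ (resp.\ $K_3,K_4$) to be the images of $\Supp f$ (resp.\ $\Supp g$) under the two coordinate projections, the integrand $f(x,z)g(z,y)$ vanishes unless $x \in K_1$, $y \in K_4$ and $z \in K_2 \cap K_3$, so $h$ vanishes off the compact set $K_1 \times K_4$, and each defining integral runs over a compact set, hence a set of finite measure since Radon measures are finite on compacta. Note in passing that the ``full support'' hypothesis plays no role in this lemma; it is needed elsewhere in the paper.

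The one step you should tighten is the assertion ``since $f$ is continuous with compact support on $X \times X$, it is uniformly continuous.'' The lemma is stated for an arbitrary locally compact (Hausdorff) space, which need not be metrizable, and on such a space ``uniformly continuous'' has no meaning until a uniform structure is fixed. This is a repairable imprecision rather than a genuine gap, because what your estimate actually needs is only the following: for $x_0 \in X$, $L \subseteq X$ compact and $\epsi > 0$, there is a neighborhood $U$ of $x_0$ such that $\sup_{z \in L}|f(x,z)-f(x_0,z)| < \epsi$ for all $x \in U$. This follows from a standard tube-lemma compactness argument (for each $z \in L$ pick a basic neighborhood $U_z \times V_z$ of $(x_0,z)$ on which $f$ varies by less than $\epsi/2$ from $f(x_0,z)$, extract a finite subcover of $L$ by the $V_z$, and intersect the corresponding $U_z$); alternatively, extend $f$ by zero to a continuous function on the product of one-point compactifications, where uniform continuity is automatic. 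With this substitute your two-term decomposition yields
$$
|h(x',y')-h(x,y)| \le \mu(K_3)\,\norm{g}_\infty \sup_{z \in K_3}|f(x',z)-f(x,z)| \;+\; \mu(K_2)\,\norm{f}_\infty \sup_{z\in K_2}|g(z,y')-g(z,y)|,
$$
and continuity follows. In the metrizable case (e.g.\ $X$ second countable locally compact) your argument is correct verbatim.
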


Let $X$ be a second countable locally compact space equipped with a Radon measure with full support. Let $K_f \co \L^2(X) \to \L^2(X) $ be a \textit{positive} Hilbert-Schmidt operator with a \textit{continuous} kernel $f \co X \times X \to \mathbb{C}$. Then by \cite[Theorem 4.3]{Bri91} (see also \cite{Jef16} \cite{Del10} for related results) the operator $K_f$ is trace-class if and only if $\int_{X} f(x,x) \d x <\infty$. In this case, we have
\begin{equation}
\label{trace-of-trace-class}
\tr K_f
=\int_{X} f(x,x) \d x.
\end{equation}

\subsection{La formule du produit scalaire}
$$
\tr(K_f K_g)
=\int_{X \times X} f(x,y) g(y,x) \d x \d y.
$$
The selfadjointness property is equivalent to
$$
\tr\big(M_\varphi(K_f)K_g^*\big)
=\tr\big(K_f(M_\varphi(K_g))^*\big), \quad \text{i.e.} \quad \tr\big(K_{\varphi f}K_g^*\big)
=\tr\big(K_f(K_{\varphi g})^*\big)
$$
for any $f,g \in \L^2(X \times X)$. Since $\L^2(X \times X) \to S^2_X$, $f \mapsto K_f$ is an isometry from the Hilbert space $\L^2(X \times X)$ onto the Hilbert space $S^2_X$ of Hilbert-Schmidt operators on $\L^2(X)$, that means that
$$
\iint_{X \times X} \varphi(x,y)f(x,y)\ovl{g(x,y)} \d x\d y
=\iint_{X \times X} f(x,y) \ovl{\varphi(x,y)g(x,y)} \d x\d y.
$$

\subsection{Fukumizu }
Kenji Fukumizu (demander ref) en fait je crois que c'est fit dans Cartier

\newpage

\subsection{Convolution groupoids}

Now, we prove the converse. 
Suppose that $M_\varphi$ is trace preserving. Let $K_g$ be an element of $S^2_X$ with a continuous kernel $g \in \L^2(X \times X)$. Then if we define $h(x,y) \ov{\mathrm{def}}{=} \int_{X} \ovl{g(z,x)}g(z,y) \d z$ then by \eqref{composition-Hilbert-Schmidt} the composition $K_h=K_g^*K_g$ is a trace-class positive operator. Since $g$ is continuous, its kernel is continuous. Hence
\begin{align*}
\MoveEqLeft
\int_X \varphi(x,x) h(x,x) \d x
\ov{\eqref{trace-of-trace-class}}{=} \tr K_{\varphi h}
=\tr M_\varphi(K_h)            
=\tr K_h 
\ov{\eqref{trace-of-trace-class}}{=} \int_X h(x,x) \d x.
\end{align*}
That means that
$$
\int_{X} \varphi(x,x) \int_{X}\ovl{g(z,x)}g(z,x) \d z \d x
=\int_{X}\int_{X}\ovl{g(z,x)}g(z,x) \d z \d x.
$$
We obtain
$$
\int_{X} \varphi(x,x) (\check{\ovl{g}}*g)(x,x) \d x
=\int_{X} (\check{\ovl{g}}*g)(x,x) \d x.
$$
for the convolution on the groupoid.

\begin{prop}
\label{Prop-unital}
Let $X$ be a second countable compact space equipped with a Radon measure with support $X$. Let $M_\varphi \co \B(\L^2(X)) \to \B(\L^2(X))$ be a measurable Schur multiplier with a continuous symbol $\varphi \co X \times X \to \mathbb{C}$. If $M_\varphi$ is unital then for any $x \in X$ we have $\varphi(x,x)=1$.
\end{prop}

\begin{proof}
By Lemma \ref{Lemma-unital-1}, for any $k$ we have for the strong operator topology
$$
M_{\varphi_{\alpha}}(1_{\M_{n_{\alpha}}})
= \Psi_{\alpha} M_\varphi\Phi_{\alpha}(1_{\M_{n_{\alpha}}}) 
\xra[\alpha]{}
\Psi_{\alpha} M_\varphi(\Id_{\B(\L^2(X))})
=1_{\M_{n_{\alpha}}}.
$$
Recall that by Lemma \ref{Lemma-approx} the operator $M_{\varphi_{\alpha}} \co \M_{n_{\alpha}} \to \M_{n_{\alpha}}$ is the Schur multiplier on the matrix algebra $\M_{n_{\alpha}}$ associated with the matrix $\big[\frac{1}{\mu(A_{i})}\frac{1}{\mu(A_{j})}\int_{A_{i} \times A_{j}} \varphi \big]_{1 \leq i,j \leq n_{\alpha}}$. Now, it is clear that for any $1 \leq i \leq n_{\alpha}$ we have $\frac{1}{\mu(A_{i})^2} \int_{A_{i} \times A_{i}} \varphi \xra[\alpha]{} 1$. By the convergence martingale theorem \cite[Theorem 3.32]{HvNVW1}, we have the convergence
$$
\sum_{i,j=1}^{n_\alpha} \frac{1}{\mu(A_{i})} \frac{1}{\mu(A_{j})}\bigg(\int_{A_{i} \times A_{j}} \varphi \bigg) 1_{A_{i} \times A_{j}}
\ov{\eqref{Def-phi-alpha}}{=} \E_{\alpha}(\varphi) \to \varphi
$$ 
almost everywhere on $X \times X$. Note that if $x \in X_0$ we have (as \cite[(2.7) page 233]{Bri2})
$$
\E_{\alpha}(\varphi)(x,x)
=\frac{1}{\mu(A_{i_x})^2} \int_{A_{i_x} \times A_{i_x}} \varphi.
$$ 
The proof is complete.
\end{proof}

\subsection{Lusin filtration}
\begin{remark} \normalfont
\label{Remark-Lusin}
Let $(X,\cal{A},\mu)$ be a $\sigma$-finite measure space We could use the notion of Lusin $\mu$-filtration of \cite[Definition 2.2]{Jef1}, a filtration $(\mathcal{A}_k)_{k \in \mathbb{N}}$ for which there exists a conegligible subset $X_0$ and an increasing\footnote{\thefootnote. A partition $\alpha$ is finer than a partition $\beta$ if each element of $\alpha$ is a subset of some element of $\beta$.} sequence $(\alpha_k)$ of countable partitions of $X_0$ such that 
\begin{flalign}
& \label{isonormal-almost} \text{$\mathcal{A}_k$ is the $\sigma$-algebra generated by $\alpha_k$} \\
&\label{difference-1} \text{$\mathcal{A} \cap X_0=\vee_k \mathcal{A}_k$,} \\
&\label{difference-2} \text{for any $x \in X_0$ we have $0<\mu(U_k(x))<\infty$}.  
\end{flalign}
where $U_k(x)$ is the unique set of the partition $\alpha_k$ of $X$ which contains $x$.
Note that by \cite[Theorem 2.3]{Jef1} any $\sigma$-finite Borel measure $\mu$ on a Souslin space admits a Lusin $\mu$-filtration. See \cite[page 232]{Bri2} for related things.
\end{remark}

\subsection{Autres}

Let us recall the version of Mercer's theorem for $\sigma$-finite measure spaces stated in \cite[Theorem 31]{Car1}. Let $X$ be a $\sigma$-finite measure space. Let $\varphi \co X \times X \to \mathbb{C}$ be a measurable positive definite kernel such that $\int_X \varphi(x,x) \d x <\infty$. Then the map $\L^2(X) \to \L^2(X)$, $\xi \mapsto \int_{X} \varphi(\cdot,y)\xi(y) \d y$ is a well-defined trace-class positive operator. The eigenfunctions $f_n \in \L^2(X)$ of this operator associated with those $n$ such that $\lambda_n \not=0$ and normalized by $\norm{f_n}_2 = 1$ satisfy
\begin{equation}
\label{Mercer-Cartier}
\varphi(x,y) 
=\sum_{n=0}^{\infty} \lambda_n f_n(x) \ovl{f_n(y)}
\end{equation}
almost everywhere. Moreover, for any $x,y \in X$ the series $\sum_{n \geq 0} \lambda_n f_n(x) \ovl{f_n(y)}$ converges absolutely. Note that the assumption $\int_X \varphi(x,x) \d x$ of this theorem is satisfied if $X$ is finite and if $\varphi$ is bounded.

https://mathoverflow.net/questions/253678/the-topology-of-pointwise-convergence-with-the-adjoint-operator-on-a-von-neumann
https://mathoverflow.net/questions/258829/operator-topologies-on-l-inftyx-mu

\vspace{0.2cm}

-preuve Lemma \ref{Lemma-iso-1}.

-th avec les angles Noncommutative harmonic analysis on semigroups. Yong Jiao,

-resultats ergodic consequence de l'angle

-cas d'un mult de Schur et d'un mult de Fourier (i.e. sans semigroupe)

-preuve de la carac de Schur

-mult de Birman

-open the dorr to transference

-continuite des path

\subsection{Konig}

Let us state a \textit{corrected} version of Mercer's theorem for finite measure spaces stated in \cite[Theorem 3.a.1 page 145]{Kon1}. Note that the original version (and its proof) contains some problematic errors\footnote{\thefootnote. In particular, the estimate $\sup_n \norm{f_n}_\infty <\infty$ is note proved and seems incorrect and consequently the arguments of the convergence of  \cite[(3.3)]{Kon1} and of $\sum_{n \in \N} \lambda_n(T_k) (f,f_n) f_n$ are inexact.}. However, the statement can be corrected as follows using the \textit{same} nice ideas. The only contribution is to provide a correct statement and proof.

\begin{thm}
\label{Mercer-corrected}
Let $X$ be a finite measure space and $\varphi \in \L^\infty(X \times X)$ such that the Hilbert-Schmidt operator $K_\varphi \co \L^2(X) \to \L^2(X)$ is positive. 
The eigenfunctions $f_n \in \L^2(X)$ of the operator $K_{\varphi}$ associated with those $n$ such that $\lambda_n \not=0$, and normalized by $\norm{f_n}_2 = 1$, actually belong to $\L^\infty(X)$ with  
\begin{equation}
\label{Mercer}
\sup_{n \geq 0} \lambda_n \norm{f_n}_\infty^2 < \infty
\quad \text{and} \quad
\varphi(x,y) 
=\sum_{n=0}^{\infty} \lambda_n f_n(x) \ovl{f_n(y)}
\end{equation}
holds almost everywhere where the series converges absolutely almost everywhere and in $\L^\infty(X \times X)$. 
\end{thm}

\begin{proof}
The positive map $K_\varphi \co \L^2(X) \to \L^2(X)$ has a unique positive square root $S \co \L^2(X) \to \L^2(X)$. For any $\xi \in \L^1(X)$ we have
\begin{align*}
\MoveEqLeft
\norm{S(\xi)}^2_2
=\la S(\xi), S(\xi)\ra_{\L^2(X)}            
=\big\la S^2(\xi), \xi \big\ra_{\L^2(X)} 
=\la K_\varphi(\xi), \xi\ra_{\L^2(X)} \\
&=\iint_{X \times X} \varphi(x,y) \ovl{\xi(x)} \xi(y) \d x \d y
\leq \norm{\varphi}_{\L^\infty(X \times X)} \norm{\xi}_{\L^1(X)}^2.
\end{align*}
This proves that we have a well-defined continuous operator $S \co \L^1(X) \to \L^2(X)$. By duality, we have a bounded  operator $S \co \L^2(X) \to \L^\infty(X)$ with $
\norm{S}_{\L^2(X) \to \L^\infty(X)} 
\leq \norm{\varphi}_{\L^\infty(X \times X)}^{\frac{1}{2}}$. For any integer $n$, note that
$$
\lambda_n \norm{f_n}_\infty^2
=\bnorm{\sqrt{\lambda_n}f_n}_\infty^2
=\norm{S(f_n)}_\infty^2 
\leq \norm{S}_{\L^2(X) \to \L^\infty(X)}^2. 
$$
Hence, we deduce the estimate of \eqref{Mercer}. Recall that 
the embedding $i \co \L^\infty(X) \mapsto \L^2(X)$ is 2-summing with $\norm{i}_{\pi_2, \L^\infty(X) \mapsto \L^2(X)} =\mu(X)^{\frac{1}{2}}$. Consequently, the operator $S \co \L^2(X) \to \L^2(X)$ is 2-summing by composition with 
\begin{equation}
\label{estimate-pi_2}
\norm{S}_{\pi_2, \L^2(X) \to \L^2(X)} 
\leq \mu(X)^{\frac{1}{2}} \norm{\varphi}_{\L^\infty(X \times X)}^{\frac{1}{2}}
\end{equation}
Now we have using \cite[Proposition 2.a.1 page 79]{Kon1} in the first inequality
\begin{align*}
\MoveEqLeft
\sum_{n=0}^\infty \lambda_n
=\sum _{n=0}^\infty \lambda_n(S)^2            
\leq \norm{S}_{\pi_2, \L^2(X) \to \L^2(X)} 
\ov{\eqref{estimate-pi_2}}{=} \mu(X) \norm{\varphi}_{\L^\infty(X \times X)}.
\end{align*}
Hence $K_\varphi$ is trace-class. Note that $K_\varphi(\xi)=\sum_n \lambda_n\la \xi, f_n\ra f_n$ for any $\xi \in \L^2(X)$. This series converges absolutely almost everywhere. The end of the proof is classical. 
\end{proof}

See the introduction of \cite{StS1} for a nice discussion of different versions of Mercer's theorem. Note also the version of Mercer's theorem stated \cite[Theorem 31]{Car1}.

\begin{proof}
Now, we prove to the second implication. Suppose that $\varphi$ is an integrally positive definite kernel. For any $\xi \in \L^2(X)$, we have the inequality \eqref{Cartier-int-pos}, i.e. $\la K_{\varphi}(\xi), \xi \ra_{\L^2(X)} \geq 0$. We infer that the operator $K_\varphi$ is positive. 
Using the above version of Mercer's theorem, we can define for any integer $n \geq 0$ the element $\alpha_n \ov{\mathrm{def}}{=} \sqrt{\lambda_n}f_n$ of $\L^2(X)$. For almost all $x \in X$, we have
$$
\sum_{n=0}^{\infty} |\alpha_n(x)|^2
=\sum_{n=0}^{\infty} \lambda_n|f_n(x)|^2
\leq \sum_{n=0}^{\infty} \lambda_n \norm{f_n}^2_\infty
<\infty.
$$
So we have an almost everywhere defined function $\alpha \co X \to \ell^2$, $x \mapsto (\alpha_n(x))_{n \geq 0}$. From \eqref{Mercer}, it is clear that 
$$
\varphi(x,y)
=\sum_{n=0}^{\infty}  \sqrt{\lambda_n}f_n(x)\ovl{\sqrt{\lambda_n}f_n(y)}
=\la \alpha(x),\alpha(y) \ra_{\ell^2}
$$ 
for almost all $x,y \in X$. Extending $\alpha$ on the whole $X$, we conclude with Example \ref{Example-kernel-2}. The remaining assertions are easy and left to the reader.
\end{proof}

\begin{proof}
The Hilbert-Schmidt operator $K_\varphi \co \L^2(X) \to \L^2(X)$ is compact and positive. By the spectral theorem, we can write
\begin{equation}
\label{Spectral-decomposition}
K_\varphi(\xi)
=\sum_{i=0}^\infty \lambda_i \la u_i,\xi \ra u_i, \quad \xi \in \L^2(X)
\end{equation}
with $\lambda_i \geq 0$ and $u_i \in \L^2(X)$. For any $i \geq 0$, define $\alpha_i =\sqrt{\lambda_i}u_i$ and the element $\alpha \co X \to \ell^2$, $x \mapsto (\alpha_i(x))$ in \textbf{?$\ell^2$?}. For any $\xi \in \L^2(X)$ and any $y$, we have  
\begin{align*}
\MoveEqLeft
\int_X \la \alpha(x), \alpha(y) \ra_{\ell^2} \xi(x) \d x
=\int_X \sum_{i=0}^{\infty} \ovl{\alpha_i(x)}\alpha_i(y) \xi(x) \d x
=\int_X \sum_{i=0}^{\infty} \lambda_i \ovl{u_i(x)}u_i(y)\xi(x) \d x  \\         
&?=?\sum_{i=0}^{\infty} \lambda_i u_i(y) \int_X \ovl{u_i(x)} \xi(x) \d x 
=\sum_{i=0}^\infty \lambda_i \la u_i,\xi \ra u_i(y) 
\ov{\eqref{Spectral-decomposition}}{=} (K_\varphi \xi)(y)
\ov{\eqref{Def-de-Kf}}{=} \int_{X} \varphi(x,y) \xi(y) \d x.
\end{align*}
Hence $
\la \alpha(x),\alpha(y) \ra_{\ell^2}
=\varphi(x,y)
$ for almost all $x,y \in X$. We conclude with Example \ref{Example-kernel-2}.
\end{proof}

\begin{remark} \normalfont
\label{remark-open-question}
It would be very useful to know if we can remove the assumption ``which induces a bounded Schur multiplier'' in the above result.
\end{remark}

The following is a partial generalization of \cite[Proposition 5.4]{Arh1}. 

	%
	%
%

\begin{proof}
Observe that $\varphi$ belongs to $\L^2(X \times X)$. So we have a well-defined Hilbert-Schmidt operator $K_\varphi \co \L^2(X) \to \L^2(X)$. If $\xi$ belongs to the Hilbert space $\L^2(X)$, note that $\ovl{\xi} \ot \xi$ belongs to $\L^2(X \times X)$. Hence the function $(x,y) \mapsto \varphi(x,y) \ovl{\xi(x)} \xi(y)$ is integrable on $X \times X$ and by Fubini's theorem we have 
\begin{equation}
\label{Fubini}
\la K_{\varphi}(\xi), \xi \ra_{\L^2(X)}
\ov{\eqref{Def-de-Kf}}{=} \iint_{X \times X} \varphi(x,y) \ovl{\xi(x)} \xi(y) \d x \d y .
\end{equation} 
We conclude with the above Cartier's observation.
\end{proof}

A generalization of the above result to $\sigma$-finite measures spaces would also be welcome, maybe using an elementary argument to reduce the $\sigma$-finite case to the finite case. It would be interesting to know if the product of two integrally positive
definite kernels of $\L^\infty(X)$ is again an integrally positive definite kernel.

\begin{remark} \normalfont
\label{Rem-int-pos-continuous}
Let $X$ be a locally compact topological space and $\mu$ be a Radon measure on $X$ with support $X$. Consider a \textit{continuous} function $\varphi \co X \times X \to \mathbb{C}$. Then the author believe that it is folklore that $\varphi$ is an integrally positive definite kernel if and only if $\varphi$ is a positive definite kernel on $X \times X$.
\end{remark}

\end{document}